\theoremstyle{plain}
\newtheorem{thrm}{Theorem}
\newtheorem{lmm}[thrm]{Lemma}
\newtheorem{prpstn}[thrm]{Proposition}
\newtheorem*{rmk}{Remark}
\numberwithin{sblmm}{thrm} 
\numberwithin{equation}{section}
\renewcommand{\phi}{\varphi}
\begin{document}
\title{Large gaps between primes}
\author{James Maynard}
\address{Magdalen College, Oxford OX1 4AU, England}
\email{james.alexander.maynard@gmail.com}
\begin{abstract}
We show that there exist pairs of consecutive primes less than $x$ whose difference is larger than 
\[t(1+o(1))(\log{x})(\log\log{x})(\log\log\log\log{x})(\log\log\log{x})^{-2}\]
for any fixed $t$. This answers a well-known question of Erd\H os.
\end{abstract}
\maketitle
\section{Introduction}
Let $G(X)=\sup_{p_n\le X}(p_{n+1}-p_n)$ denote the maximal gap between primes of size at most $X$. Westzynthius \cite{Westzynthius} was the first to show that $G(X)$ could become arbitrarily large compared with the average gap $(1+o(1))\log{X}$, and this was improved by Erd\H os \cite{Erdos} and then Rankin \cite{RankinOld}, who succeeded in showing that for $X$ sufficiently large
\begin{equation}
G(X)\ge (c+o(1))(\log{X})(\log_2{X})(\log_4{X})(\log_3{X})^{-2},\label{eq:RankinBound}
\end{equation}
with the constant $c=1/3$, where $\log_v$ denotes the $v$-fold logarithm. Since Rankin's 1938 result, however, the only improvements have been in the constant $c$. Such improvements were obtained in the work of Sch\"onhage \cite{Schonhage}, Rankin \cite{RankinNew}, Maier and Pomerance \cite{MaierPomerance}, with the best constant $c=2e^\gamma$ due to Pintz \cite{Pintz}. In this paper we show that one can take the constant $c$ to be arbitrarily large by incorporating sieve ideas based on the recent results on small gaps between primes \cite{Maynard,MaynardII,Polymath} into the Erd\H os-Rankin method.
\begin{thrm}\label{thrm:MainTheorem}
We have
\[\limsup_n\frac{p_{n+1}-p_n}{(\log{p_n})(\log_2{p_n})(\log_4{p_n})(\log_3{p_n})^{-2}}=\infty.\]
\end{thrm}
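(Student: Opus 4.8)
The plan is to carry out the Erd\H os--Rankin construction, but to replace its wasteful final covering step by an argument built on the multidimensional sieve of \cite{Maynard}. I would first reduce Theorem~\ref{thrm:MainTheorem} to the following covering statement: for every fixed $A>0$ and all sufficiently large $x$, writing $y:=\lfloor A\,x\,(\log x)(\log_3 x)(\log_2 x)^{-2}\rfloor$, one can choose a residue class $a_p\bmod p$ for every prime $p\le x$ so that every integer of $[1,y]$ lies in some $a_p+p\mathbb{Z}$. Granting this, the Chinese Remainder Theorem produces an integer $m$ with $\prod_{p\le x}p<m\le 2\prod_{p\le x}p$, so $m=e^{(1+o(1))x}$, and $m\equiv -a_p\pmod p$ for all $p\le x$; then for each $1\le j\le y$ some prime $p\le x$ divides $m+j$, which is therefore composite (it exceeds $x\ge p$), so the consecutive primes $p_n\le m$ and $p_{n+1}\ge m+y+1$ satisfy $p_{n+1}-p_n\ge y+1$. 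Since $f(t):=(\log t)(\log_2 t)(\log_4 t)(\log_3 t)^{-2}$ is increasing for large $t$ and $p_n\le m$, the ratio in Theorem~\ref{thrm:MainTheorem} is at least $y/f(m)=A+o(1)$; letting $A\to\infty$ finishes the proof.

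For the covering statement I would split the primes $p\le x$ into three ranges. The \emph{small} primes $p\le z$, where $\log z$ has order $(\log x)(\log_3 x)/\log_2 x$ (so $z=x^{o(1)}$): for these set $a_p=0$, leaving still-uncovered exactly $\mathcal{R}:=\{n\in[1,y]:\gcd(n,\prod_{p\le z}p)=1\}$, which by Mertens' theorem has $|\mathcal{R}|\asymp y/\log z$. The \emph{large} primes $p\in(x/2,x]$, numbering $(1+o(1))x/(2\log x)$: these are held in reserve, to cover --- one prime per point --- a residual set of size $o(x/\log x)$. And the \emph{intermediate} primes, those between $z$ and $x/2$, which must do the real work of whittling $\mathcal{R}$ down to size $o(x/\log x)$. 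An independent or greedy choice of the intermediate residue classes loses a coupon-collector type factor and only reduces $|\mathcal{R}|$ to $\asymp y/\log x\gg x/\log x$; classical treatments do better by a matching argument and recover \eqref{eq:RankinBound} with a bounded constant, but to push the constant to infinity one needs a genuinely more efficient covering.

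The new ingredient is to choose the intermediate residue classes using the Maynard--Tao sieve. Fix a large parameter $k$ (depending on $A$, and allowed to grow slowly with $x$) and an admissible $k$-tuple $\mathcal{H}=\{h_1,\dots,h_k\}$, and form the weights $w_n=\bigl(\sum_{d\mid (n+h_1)\cdots(n+h_k)}\lambda_d\bigr)^2$, for which the small-gaps machinery provides $\sum_n w_n>0$ together with $\sum_n w_n\,\#\{i\le k:\,n+h_i\text{ prime}\}\ge\delta(\log k)\sum_n w_n$ for some absolute $\delta>0$. One then assigns the intermediate residue classes in a \emph{correlated} fashion governed by these weights --- informally, $a_p$ is forced to equal $-h_{i(p)}\bmod p$ with the index $i(p)$ drawn from a distribution read off from the $w_n$, so that many intermediate primes are pressed into covering the same cluster of points of $\mathcal{R}$ and a typical point of $\mathcal{R}$ gets covered with the efficiency of the sieve rather than that of an independent choice --- and one computes the expected number of points of $\mathcal{R}$ left uncovered. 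The key quantitative claim is that this expectation beats the naive $\asymp y/\log x$ by essentially the sieve's efficiency factor, hence is $o(x/\log x)$ once $k$ and the length of the intermediate range are chosen well; a routine averaging (or deletion) step then yields an actual admissible choice of intermediate residues with only $o(x/\log x)$ uncovered points, which the reserved large primes finish off.

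The main obstacle is precisely this last sieve estimate: establishing the quantitative Maynard--Tao bounds uniformly as $k\to\infty$, designing the correlated selection so that the relevant averages factor as a main term times the sieve ratio $\asymp\log k$, and controlling all error terms with full uniformity --- the last being where equidistribution of primes (and of the pertinent almost-prime densities) in arithmetic progressions enters, in a range where the Bombieri--Vinogradov theorem suffices. Everything else --- the reduction above, the Mertens estimate for $|\mathcal{R}|$, arranging the three ranges so that the intermediate one is long enough for the gain to dominate while leaving the large primes free, and the concluding mop-up --- is routine.
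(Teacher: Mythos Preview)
Your outline has the right headline idea (Erd\H os--Rankin plus a Maynard--Tao weighted covering), but several structural choices differ from the paper in ways that matter, and the central step is left as an ``obstacle'' rather than carried out.

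First, the initial sieving in the paper is more elaborate than yours. The paper takes $a_p=0$ for $y<p\le z$ \emph{and} $a_p=1$ for $p\le y$ (with $z=x/\log_2 x$ close to $x$, not $x^{o(1)}$). This two-stage choice reduces the residual set to unions of sets $\mathcal{R}_m$ consisting of genuine \emph{primes} $p$ in a range (with $(mp-1,P_y)=1$), not merely $z$-rough integers. Working with primes is what makes the GPY machinery bite. Your single $a_p=0$ step for $p\le z$ leaves a set of rough numbers, and you would then need the sieve input in the form ``many $n+h_i$ are rough'' rather than ``many $n+h_i$ are prime'', which is a different (and easier, but differently organized) statement.

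Second, you have the roles of the prime ranges reversed. In the paper the primes $q\in[x/2,x]$ do the weighted probabilistic covering, and the primes in $(z,x/2]$ are the reserve that mops up the $o(x/\log x)$ leftovers. This is not cosmetic: the covering primes $q$ are taken comparable in size to the residual primes $p_0\in\mathcal{R}_m$, so that a residue class $a\pmod q$ meets the relevant range in only $O(1)$ points, which is what allows the pointwise analysis of $\mu_{m,q}(p_0)$.

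Third, and most importantly, your description of the sieve mechanism does not capture the paper's key idea. You propose weights $w_n=(\sum_{d\mid\prod(n+h_i)}\lambda_d)^2$ with \emph{fixed} shifts and set $a_p\equiv -h_{i(p)}\pmod p$. The paper instead defines, for each covering prime $q$, a probability measure $\mu_{m,q}$ on \emph{all} residue classes $a\pmod q$, with weight built from the linear forms $n+h_i q$ and $m(n+h_iq)-1$: the shifts scale with $q$. The payoff is that for a fixed residual prime $p_0$, the contribution to $\sum_q\mu_{m,q}(p_0)$ from $n=p_0-hq$ involves the forms $p_0+(h_i-h)q$, which are linear in $q$; thus the sum over $q$ is itself a GPY-type sum and yields the $\log k$ gain (Lemma~\ref{lmm:PrimeSum}). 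Your fixed-shift, $k$-residue-class scheme has no such dual structure: for a fixed $n_0\in\mathcal{R}$, whether $n_0$ is covered by $p$ depends on $p\mid n_0+h_{i(p)}$, and summing this over $p$ does not visibly produce a second Maynard--Tao sum or the factor $\log k$. Without that, the ``expected number of uncovered points beats $y/\log x$ by the sieve's efficiency'' claim is unsupported. Since you yourself flag this estimate as the main obstacle, the proposal as it stands is a plan, not a proof.
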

We note that Ford, Green, Konyagin and Tao have independently obtained this result in the recent work \cite{FGKT} using a different method. Their work is based on incorporating results on linear equations in the primes \cite{GTLinearEquations, GTInverse, GTMobius} (rather than work on small gaps between primes) into the Erd\H os-Rankin construction.
\begin{rmk}
The method presented here in fact allows one to obtain a quantitative improvement to Rankin's bound \eqref{eq:RankinBound}. We shall address this in forthcoming work.
\end{rmk}
\section{The Erd\H os-Rankin construction}\label{sec:ErdosRankin}
As with most approaches to the problem, we follow the Erd\H os-Rankin construction for large gaps, modifying only the final stage of the argument. We wish to choose residue classes $a_p\pmod{p}$ for each prime $p\le x$ such that every integer $n\in[1,U]$ satisfies $n\equiv a_p\pmod{p}$ for some prime $p\le x$.

We fix constants $C_U,\epsilon>0$, (and we will assume $\epsilon$ is sufficiently small at various parts of the argument) and let $y$, $z$, $U$ be defined in terms of $x$ by
\begin{align}
y=\exp\Bigl((1-\epsilon)\frac{\log{x}\log_3{x}}{\log_2{x}}\Bigr),\qquad z=\frac{x}{\log_2{x}},\qquad U=C_U\frac{x\log{y}}{\log_2{x}}.
\end{align}
The only difference between these choices and those of \cite{MaierPomerance} is that here $U$ is determined in terms of an unspecified constant $C_U$ (which we will show can be taken arbitrarily large) rather than a specific choice of $C_U$ slightly less than $1.32e^\gamma$.

If we can cover the interval $[1,U]$ by residue classes $a_p\pmod{p}$ for $p\le x$, then any $U_0$ satisfying $U_0\equiv-a_p\pmod{p}$ for all $p\le x$ has the property that $U_0+j$ has a prime factor of size at most $x$ for each $j\in[1,U]$. By the Chinese remainder theorem, there is such a $U_0$ in the interval $[x,x+\exp((1+o(1))x)]$, and this gives rise to an interval of length $U$ which contains no primes since $U_0+j$ is bigger than $x$ but contains a prime factor less than $x$ for all $j\in[1,U]$. Letting $x=(1-\epsilon)\log{X}$, and recalling our choice of $U$ and $y$ above, we see that this would show there is an interval in $[1,X]$ of length $(1-2\epsilon+o(1))C_U(\log{X})(\log_2{X})(\log_4{X})(\log_3{X})^{-2}$ containing no primes. Therefore we immediately obtain Theorem \ref{thrm:MainTheorem} if we can take $C_U$ to be arbitrarily large whilst still covering $[1,U]$ by the residue classes $a_p\pmod{p}$ for $p\le x$.

We choose $a_p$ for primes $p\le z$ by
\begin{equation}
\begin{split}
a_p&=1,\quad \text{for every prime $1< p\le y$,}\\
a_p&=0,\quad \text{for every prime $y<p\le z$.}
\end{split}
\end{equation}
After removing elements of $[1,U]$ in these residue classes we are left with the set
\begin{equation}
\mathcal{R}\cup\mathcal{R}',
\end{equation}
where
\begin{equation}
\begin{split}
\mathcal{R}&=\{mp\le U:p> z,m\text{ is $y$-smooth}, (mp-1,P_y)=1\},\\
\mathcal{R}'&=\{m\le U:m\text{ is $y$-smooth}, (m-1,P_y)=1\},\\
P_t&=\prod_{p\le t}p\qquad \text{for any $t\in\mathbb{R}$.}
\end{split}
\end{equation}
We first note that since $p>z$ the condition $(mp-1,P_y)=1$ requires that $m$ be even. We split $\mathcal{R}$ according to this integer $m$. For even $m$ we let
\begin{equation}
\mathcal{R}_m=\{z<p\le U/m:(mp-1,P_y)=1\}.
\end{equation}
\begin{lmm}\label{lmm:Rp}
We have
\[|\mathcal{R}'|\ll \frac{x}{(\log{x})^{1+\epsilon}}.\]
\end{lmm}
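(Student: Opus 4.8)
The plan is that the smoothness of $m$ alone will fall short, so the condition $(m-1,P_y)=1$ must also be used. Write $\Psi(V,t)=\#\{n\le V:n\text{ is }t\text{-smooth}\}$ and $u=\log U/\log y$. From the definitions of $y$ and $U$ one computes $\log U=\log x+\log_2 x+O(\log_3 x)$, $u=(1+o(1))\frac{\log_2 x}{(1-\epsilon)\log_3 x}$ and $\log u=(1+o(1))\log_3 x$, so $u\log u=(1+o(1))\frac{\log_2 x}{1-\epsilon}$; by de Bruijn's estimate (only its upper bound is needed), $\log\Psi(U,y)=\log U-(1+o(1))u\log u$, whence
\[
\Psi(U,y)=x(\log x)^{-\frac{\epsilon}{1-\epsilon}+o(1)}.
\]
Since $\frac{\epsilon}{1-\epsilon}$ lies well below $1+\epsilon$, the argument must extract a further, almost complete, power of $\log x$ from the condition on $m-1$.

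I would fix a sieve parameter $z=\exp\bigl((\log x)^{1-\eta}\bigr)$ with $\eta=\eta(\epsilon)>0$ small (so $z\le y$ for large $x$), bound $|\mathcal{R}'|\le\#\{m\le U:m\text{ is }y\text{-smooth},\ (m-1,P_z)=1\}$, and apply a standard upper-bound sieve (the fundamental lemma) to the sequence $\mathcal{A}=(m-1)_{m\le U,\,m\text{ }y\text{-smooth}}$, sifting by the primes $p<z$ with sifting limit $D=z^{s}$ for some $s=s(x)\to\infty$ chosen so that $D\le U^{1/2}$ (which forces only $D=x^{o(1)}$, as $\log z=(\log x)^{1-\eta}$). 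The local density is $g(p)=1/p$: a $y$-smooth $m\equiv 1\ (d)$ is automatically coprime to $d$, and since $\Psi(U/e,y)=(1+o(1))e^{-1}\Psi(U,y)$ uniformly for squarefree $e\le D$, one gets for squarefree $d\mid P_z$
\[
\Psi_d(U,y):=\#\{m\le U:m\text{ is }y\text{-smooth},\ m\equiv 1\ (d)\}=\frac{\Psi(U,y)}{d}+r_d .
\]
Mertens' theorem then gives $|\mathcal{R}'|\le(1+o(1))\Psi(U,y)\prod_{p<z}(1-\tfrac1p)+O\bigl(\sum_{d\le D,\,d\mid P_z}|r_d|\bigr)\ll\Psi(U,y)/\log z$, provided the remainder sum is $o(\Psi(U,y)/\log z)$.

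The main obstacle is exactly this remainder estimate: the distribution of $y$-smooth integers up to $U$ in the residue classes $1\ (d)$, averaged over squarefree $z$-smooth $d\le D=x^{o(1)}$. Here $\log d=o(\log y)$ throughout, which is the classical favorable range for smooth numbers in arithmetic progressions, so the required saving holds comfortably. Granting it, inserting $\log z=(\log x)^{1-\eta}$ and the formula for $\Psi(U,y)$ yields $|\mathcal{R}'|\ll x(\log x)^{-1-\frac{\epsilon}{1-\epsilon}+\eta+o(1)}$; since $\frac{\epsilon}{1-\epsilon}-\epsilon=\frac{\epsilon^2}{1-\epsilon}>0$, choosing $\eta<\frac{\epsilon^2}{1-\epsilon}$ (say $\eta=\frac{\epsilon^2}{2(1-\epsilon)}$) makes the exponent at most $-1-\epsilon$ for all large $x$, which is the claim.
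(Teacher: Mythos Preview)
The paper does not prove this lemma at all: it simply cites \cite[Theorem 5.3]{MaierPomerance} and observes that the modified choice of $U$ is immaterial. Your proposal instead supplies an actual argument, and the strategy---bound $\Psi(U,y)$ via de Bruijn and then sieve the shifted sequence $m-1$ by small primes to extract a further factor of nearly $1/\log y$---is the natural one, and is essentially the computation behind the Maier--Pomerance theorem. Your numerics (the value of $u\log u$, the exponent $\epsilon/(1-\epsilon)$ in $\Psi(U,y)$, and the final inequality $\eta<\epsilon^2/(1-\epsilon)$) are all correct.

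Two points deserve comment. First, a purely notational hazard: your sieve parameter $z=\exp((\log x)^{1-\eta})$ collides with the paper's $z=x/\log_2 x$; you should rename it. Second, and more substantively, the step you label ``the main obstacle'' and then dismiss with ``the required saving holds comfortably'' is the only place real work is needed. You need
\[
\sum_{\substack{d\le D\\ d\mid P_{z_0}}}\Bigl|\Psi(U,y;1,d)-\frac{\Psi(U,y)}{d}\Bigr|=o\Bigl(\frac{\Psi(U,y)}{\log z_0}\Bigr)
\]
with $D=y^{o(1)}$. This is indeed known---results of Fouvry--Tenenbaum and Granville on smooth numbers in arithmetic progressions cover moduli $d=y^{o(1)}$ with a relative error that is a negative power of $\log y$, which is more than enough here since $\log z_0=(\log y)^{1-o(1)}$---but it is not a one-line consequence of $\Psi(U/e,y)\sim e^{-1}\Psi(U,y)$ alone (that estimate handles the coprimality correction $\Psi_d(U,y)/\phi(d)$ versus $\Psi(U,y)/d$, not the equidistribution among coprime residue classes). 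A precise citation at this point would close the sketch; without it the argument is correct in outline but incomplete as written.
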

\begin{proof}
This is \cite[Theorem 5.3]{MaierPomerance}. Our slightly different choice of $U$ does not affect the argument from \cite{MaierPomerance}.
\end{proof}
\begin{lmm}\label{lmm:SetSizes}
We have uniformly for $z+z/\log{x}\le V\le x(\log{x})^2$ and $m\le x$
\begin{multline*}
\#\{z<p\le V:(mp-1,P_y)=1\}\\
=\frac{V-z}{\log{x}}\Bigl(\prod_{\substack{p\le y\\ p\nmid m}}\frac{p-2}{p-1}\Bigr)\Bigl(1+O(\exp(-(\log_2{x})^{1/2}))\Bigr).
\end{multline*}
In particular, uniformly for even $m\le U(1-1/\log{x})/z$ we have
\begin{align*} 
|\mathcal{R}_m|&=\frac{2 e^{-\gamma} U(1+o(1))}{m(\log{x})(\log{y})}\Bigl(\prod_{p>2}\frac{p(p-2)}{(p-1)^2}\Bigr)\Bigl(\prod_{p|m, p>2}\frac{p-1}{p-2}\Bigr).
\end{align*}
\begin{proof}
The first statement follows from a `fundamental lemma' sieve and the Bombieri-Vinogradov theorem; see \cite[Theorem 6.12]{FriedlanderIwaniec} for example. The second statement follows immediately from the first using Mertens' theorem.
\end{proof}
\begin{lmm}\label{lmm:RmBounds}
For any $M\ge 2$ we have
\[\sum_{U/(z M)\le m<U/z}|\mathcal{R}_m|\ll \frac{U \log{M}}{(\log{x})(\log{y})}.\]
In particular
\begin{align*}\sum_{U/(z(\log_2{x})^2)\le m<U/z}|\mathcal{R}_m|=o\Bigl(\frac{C_U x}{\log{x}}\Bigr),\qquad \sum_{1\le m<U/(z(\log_2{x})^2)}|\mathcal{R}_m|=O\Bigl(\frac{C_U x}{\log{x}}\Bigr).
\end{align*}
\end{lmm}
\begin{proof}
Let $w_1=U / (z M)$ and let $w_2=U(1-1/\log{x})/z$. For $m\ge w_2$ we use the trivial bound $|\mathcal{R}_m|\ll U/(m\log{x})$, and so we see that the contribution from $w_2\le m<U/z$ is at most $O(U/(\log{x})^2)$. We now consider $w_1\le m<w_2$. Using the bound
\[\prod_{p|m,\,p>2}\frac{p-1}{p-2}\ll \prod_{p|m}\frac{p+1}{p}=\sum_{d|m}\frac{1}{d},\]
letting $m=m' d$, and using Lemma \ref{lmm:SetSizes}, we have
\[\sum_{w_1\le m<w_2}|\mathcal{R}_m|\ll \frac{U}{(\log{x})(\log{y})}\sum_{d<w_2}\frac{1}{d^2}\sum_{w_1/d\le m'<w_2/d}\frac{1}{m'}\ll \frac{U(\log{M}+O(1))}{(\log{x})(\log{y})}.\]
The final estimates follow immediately from recalling the definitions of $U$ and $y$.
\end{proof}
\end{lmm}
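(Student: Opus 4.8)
The plan is to split the sum over $m$ into a main range, where the defining interval $(z,U/m]$ of $\mathcal{R}_m$ is long enough for the sieve asymptotic of Lemma~\ref{lmm:SetSizes} to apply, and a short top range where it does not, and then to evaluate the resulting divisor-weighted sum by interchanging the order of summation. Throughout one may restrict to even $m$: since $p>z$, the condition $(mp-1,P_y)=1$ forces $m$ even, so $\mathcal{R}_m=\emptyset$ otherwise, and also $\mathcal{R}_m=\emptyset$ once $m\ge U/z$. Set $w_1=U/(zM)$ and $w_2=U(1-1/\log x)/z$. For $w_2\le m<U/z$ the interval $(z,U/m]$ has length $\le z/(\log x-1)$, so bounding the number of primes (indeed of all integers) in it crudely gives $|\mathcal{R}_m|\ll z/(\log x)^2$; by the definitions of $U$ and $y$ there are only $O(1)$ such $m$, and $z\le U$, so the contribution of this range is $O(U/(\log x)^2)$, which is $\ll U\log M/((\log x)(\log y))$ since $\log y<\log x$ and $M\ge2$.

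For the main range $\max(1,w_1)\le m<w_2$ I would apply the second estimate of Lemma~\ref{lmm:SetSizes} to each even $m$, obtaining
\[
|\mathcal{R}_m|\ll\frac{U}{m(\log x)(\log y)}\prod_{\substack{p\mid m\\ p>2}}\frac{p-1}{p-2}.
\]
Since $\prod_{p>2}\bigl(1+\tfrac{2}{p^2-p-2}\bigr)$ converges, the arithmetic factor is $\ll\prod_{p\mid m}\bigl(1+\tfrac1p\bigr)=\sum_{d\mid m}\mu^2(d)/d\le\sum_{d\mid m}1/d$. Writing $m=m'd$ and interchanging the order of summation,
\[
\sum_{w_1\le m<w_2}|\mathcal{R}_m|\ll\frac{U}{(\log x)(\log y)}\sum_{d\ge1}\frac1{d^2}\sum_{w_1/d\le m'<w_2/d}\frac1{m'},
\]
and for each $d$ the inner sum (when nonempty) is $\ll\log(w_2/w_1)+O(1)\ll\log M$, using $w_2/w_1\le M$, $M\ge2$, and, in the case $w_1/d<1$, that $w_2/d<w_2/w_1$. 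Since $\sum_{d\ge1}d^{-2}=O(1)$, the right-hand side is $\ll U\log M/((\log x)(\log y))$, and combining with the top range proves the first assertion.

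Finally, I would deduce the two ``in particular'' estimates by substituting $U=C_U x\log y/\log_2 x$ and $\log y=(1-\epsilon)(\log x)(\log_3 x)/\log_2 x$. Taking $M=(\log_2 x)^2$ bounds the sum over $U/(z(\log_2 x)^2)\le m<U/z$ by $\ll C_U x\log_3 x/((\log x)(\log_2 x))=o(x/\log x)$ since $\log_3 x=o(\log_2 x)$; taking $M=\lceil U/z\rceil$ (which exceeds $2$ once $x$ is large) bounds the sum over all $1\le m<U/z$, hence also over $1\le m<U/(z(\log_2 x)^2)$, by $\ll U\log(U/z)/((\log x)(\log y))\ll U\log_2 x/((\log x)(\log y))=O(C_U x/\log x)$, using $\log(U/z)\le\log_2 x+O(1)$. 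I expect the only genuine subtlety to be the top range $m\approx U/z$, where Lemma~\ref{lmm:SetSizes} is inapplicable because $(z,U/m]$ is too short and one must fall back on an unconditional bound; everything else is routine divisor-sum manipulation.
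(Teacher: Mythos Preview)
Your argument is correct and follows essentially the same route as the paper: the same split at $w_2=U(1-1/\log x)/z$, the same divisor-sum bound $\prod_{p\mid m,\,p>2}\tfrac{p-1}{p-2}\ll\sum_{d\mid m}1/d$, and the same substitution $m=m'd$ to evaluate the main range. One small slip: your parenthetical ``(indeed of all integers)'' is false, since an interval of length $z/(\log x-1)$ contains $\asymp z/\log x$ integers, not $z/(\log x)^2$; the bound $|\mathcal{R}_m|\ll z/(\log x)^2$ is nonetheless correct as a prime count (by the prime number theorem or Brun--Titchmarsh), and that is all you need.
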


We now state our key proposition.
\begin{prpstn}\label{prpstn:MainProp}
Fix $\delta>0$. Let $m<U z^{-1}(\log_2{x})^{-2}$ be even and let $\mathcal{I}_m\subseteq [x/2,x]$ be an interval of length at least $\delta |\mathcal{R}_m|\log{x}$.

Then for $x>x_0(\delta,C_U)$, there exists a choice of residue classes $a_q\pmod{q}$ for each prime $q\in\mathcal{I}_m$ such that
\[p\in\mathcal{R}_m\Rightarrow p\equiv a_q\pmod{q}\text{ for some prime $q\in\mathcal{I}_m$}.\]
\end{prpstn}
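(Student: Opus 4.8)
The plan is to treat the ``residual primes'' $\mathcal R_m$ as a dense set of primes in the long interval $(z,U/m]$ and to apply the Maynard--Tao sieve to show that a carefully weighted choice of the classes $a_q$ is extremely efficient — so efficient that $\delta|\mathcal R_m|$ primes suffice, however small $\delta$ is.

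First I set up the sieve. Because $m<Uz^{-1}(\log_2 x)^{-2}$ one has $U/m>x\log_2 x$, so $(U/m)/x\to\infty$; fix $k_0=k_0(\delta)$ large (to be chosen) and, for $x$ large, an admissible $k_0$-tuple $\mathcal H=\{h_1<\cdots<h_{k_0}\}\subseteq[1,\lfloor (U/m-z)/(2x)\rfloor]$ all of whose elements lie in one residue class modulo $\prod_{\ell\le 2k_0}\ell$ (this device, together with $m$ being even — which makes the condition $(mn-1,P_y)=1$ vacuous modulo $2$ — keeps the forthcoming sieve non-degenerate). For a prime $q\in\mathcal I_m$ and a base point $b$ with $z<b\le U/m-k_0 x$, the numbers $b+h_1q,\dots,b+h_{k_0}q$ are then $k_0$ distinct elements of $(z,U/m]$ all lying in the class $b\pmod q$.

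The key estimate is: for each prime $q\in\mathcal I_m$ there are non-negative weights $w_q(b)=\big(\sum_{d_i\mid b+h_iq}\lambda_{d_1,\dots,d_{k_0}}\big)^2$ — with $\lambda$ the Maynard--Tao weights supported on $\prod d_i\le x^{\epsilon^2}$, coprime to a fixed product of small primes and to $P_y$, and $b$ restricted to the residue classes modulo $\prod_{\ell\le y,\ \ell\nmid m}\ell$ for which every $(m(b+h_iq)-1,P_y)=1$ — such that
\[
\sum_b w_q(b)\,\#\{1\le i\le k_0:\ b+h_iq\in\mathcal R_m\}\ \ge\ (\rho_0+o(1))\sum_b w_q(b),
\]
uniformly in $q$ and $m$, where $\rho_0=\rho_0(k_0)\gg\log k_0$. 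The numerator and denominator sums are exactly those of \cite{Maynard,MaynardII,Polymath}; the only changes are the extra congruence conditions on $b$ (harmless because $\mathcal H$ is admissible, so for every prime $\ell$ at most $\ell-1$ classes are forbidden) and the replacement of ``primes in a dyadic interval equidistributed in progressions of small modulus'' by $\mathcal R_m$ — an equidistribution $\#\{n\in\mathcal R_m:\ n\equiv c\ (\mathrm{mod}\ d)\}=(1+o(1))\phi(d)^{-1}(\text{local factor})\,|\mathcal R_m|$ on average over $d\le x^{2\epsilon^2}$, which follows, just as in Lemma \ref{lmm:SetSizes}, from a fundamental-lemma sieve (handling the primes $\le y$) together with the Bombieri--Vinogradov theorem (handling the modulus $d=x^{o(1)}$). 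Choosing $k_0$ large enough in terms of $\delta$ we may assume $\rho_0\ge 3/\delta$.

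Finally I convert this into a covering. Since $b+h_iq\equiv b\ (\mathrm{mod}\ q)$, the estimate gives $\sum_q\big(\sum_b w_q(b)\big)^{-1}\sum_b w_q(b)\,\#\{p\in\mathcal R_m:\ p\equiv b\ (q)\}\ge(\rho_0+o(1))\,\#\{q\in\mathcal I_m\ \text{prime}\}$, and $\#\{q\in\mathcal I_m\ \text{prime}\}=(1+o(1))|\mathcal I_m|/\log x\ge(1+o(1))\delta|\mathcal R_m|$. Reserving half of the primes of $\mathcal I_m$, one runs a by-now standard probabilistic/greedy argument on the other half — at each stage picking $a_q$ (equivalently a base point $b$) with probability proportional to $w_q(b)$, and using that for distinct $q,q'\in[x/2,x]$ one has $qq'>U/m$, so any two chosen classes meet $\mathcal R_m$ in at most one common point — to cover all but $o(|\mathcal R_m|)$ elements of $\mathcal R_m$. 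As the number of reserved primes is $\gg\delta|\mathcal R_m|$, vastly exceeding the $o(|\mathcal R_m|)$ leftover uncovered elements, we finish by giving each leftover $p$ its own reserved prime $q$ with $a_q\equiv p\pmod q$ and choosing $a_q$ arbitrarily for any primes not yet used. The main obstacle is the key estimate: extracting a bound $\rho_0\to\infty$ from the Maynard--Tao machinery while simultaneously accommodating the congruences forced by $(mn-1,P_y)=1$ and controlling the error terms in the equidistribution of $\mathcal R_m$ uniformly over $q\in\mathcal I_m$.
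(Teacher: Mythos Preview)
Your setup is essentially the same as the paper's: you define the Maynard--Tao weights $w_q(b)$ and propose to pick the residue class $a_q$ with probability proportional to $w_q(a_q)$, which is exactly the measure $\mu_{m,q}$ the paper uses. Your ``key estimate'' is also correct and corresponds to Lemma~\ref{lmm:ConstSum} combined with a standard primes-in-tuples computation: for each fixed $q$, the weighted average of $\#\{p\in\mathcal R_m:p\equiv b\pmod q\}$ over $b$ is $\gg\log k_0$.

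The gap is in the conversion step. What you have proved is the \emph{total} first moment
\[
\sum_{p\in\mathcal R_m}\ \sum_{q\in\mathcal I_m}\mu_{m,q}(p)\ \ge\ (\rho_0+o(1))\,\delta|\mathcal R_m|,
\]
i.e.\ the average over $p\in\mathcal R_m$ of the expected number of times $p$ is hit is at least $\rho_0\delta$. But the probabilistic covering argument needs the \emph{pointwise} bound: for almost every $p_0\in\mathcal R_m$ individually, $\sum_q\mu_{m,q}(p_0)$ is large. An average bound does not give this; nothing you have said rules out the mass $\sum_q\mu_{m,q}(\cdot)$ being concentrated on a small subset of $\mathcal R_m$, in which case the complement is never covered no matter how you choose the $a_q$. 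The overlap observation $qq'>U/m$ is not the right tool here: it bounds the intersection of two \emph{chosen} classes, not the multiplicity with which a fixed $p_0$ is covered, and in any case $\binom{Q}{2}$ with $Q\asymp\delta|\mathcal R_m|$ is far too large to be useful. No ``standard'' greedy or second-moment argument closes this gap from the information you have recorded.

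What the paper does instead---and this is the genuinely new idea you are missing---is to \emph{switch the roles of $n$ and $q$}. Fixing $p_0\in\mathcal R_m$ and restricting the sum defining $\mu_{m,q}(p_0)$ to the terms $n=p_0-hq$ for $h\in\mathcal H$, one obtains a lower bound for $\sum_{q\in\mathcal I_m}\mu_{m,q}(p_0)$ as a sieve sum in which the \emph{prime $q\in\mathcal I_m$} is the running variable and the divisibility constraints are $d_i\mid p_0+(h_i-h)q$, $e_i\mid m(p_0+(h_i-h)q)-1$. Because these are linear in $q$, this is again a Maynard--Tao type sum, now over primes $q\in[x/2,x]$, and it is evaluated by a second application of Bombieri--Vinogradov (for primes $q$ in progressions to moduli $\ll x^{1/5+o(1)}$). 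This yields $\sum_q\mu_{m,q}(p_0)\gg\delta\log k_0$ uniformly for almost all $p_0$ (Lemma~\ref{lmm:PrimeSum}), which is exactly the pointwise input the first-moment covering argument needs. Your proposal carries out the sieve only with $q$ fixed and $n$ varying; the essential second calculation, with $p_0$ fixed and $q$ varying, is absent.
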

Theorem \ref{thrm:MainTheorem} now follows almost immediately from Proposition \ref{prpstn:MainProp}. 
\begin{proof}[Proof of Theorem \ref{thrm:MainTheorem} assuming Propostion \ref{prpstn:MainProp}]By Lemma \ref{lmm:RmBounds}, we see that
\begin{equation}
\sum_{m< U z^{-1}(\log_2{x})^{-2}}\delta |\mathcal{R}_m|\log{x}\ll \delta C_U x.
\end{equation}
Therefore, if $\delta$ is sufficiently small compared with $C_U$, we can choose intervals $\mathcal{I}_m$ of length $\delta|\mathcal{R}_m|\log{x}$ for each even $m< Uz^{-1}(\log_2{x})^{-2}$ such that all the $\mathcal{I}_m$ are disjoint and contained in $[x/2,x]$. By Proposition \ref{prpstn:MainProp} we can cover $\mathcal{R}_m$ using a residue class for each prime in $\mathcal{I}_m$, for each such $m$. By Lemmas \ref{lmm:Rp}, \ref{lmm:SetSizes} and \ref{lmm:RmBounds}, this means we can cover all but $o(x/\log{x})$ elements of $\mathcal{R}\cup\mathcal{R}'$ using only residue classes of primes in $[x/2,x]$. By choosing the residue class of one remaining element for each prime in $[z,x/2]$, we can then cover all the remaining elements of $\mathcal{R}\cup\mathcal{R}'$. Therefore we can choose residue classes $a_p\pmod{p}$ for all $p\le x$ which cover all of $[1,U]$, for any fixed choice of $C_U$. This completes the proof.
\end{proof}
We actually prove Proposition \ref{prpstn:MainProp} in a slightly different (but equivalent) form: We show that for any fixed $\epsilon,\delta>0$ and interval $\mathcal{I}_m\subseteq[x/2,x]$ of length at least $\delta|\mathcal{R}_m|\log{x}$, we can choose residue classes $a_q\pmod{q}$ for primes $q\in\mathcal{I}_m$ such that all but $\epsilon|\mathcal{R}_m|$ elements $p$ of $\mathcal{R}_m$ satisfy $p\equiv a_q\pmod{q}$ for some prime $q\in\mathcal{I}_m$. (Where $m$ is as in Proposition \ref{prpstn:MainProp}.)

By appending to $\mathcal{I}_m$ an interval of length $2\epsilon|\mathcal{R}_m|\log{x}$, we can choose the residue class of one of the remaining $\epsilon|\mathcal{R}_m|$ elements of $\mathcal{R}_m$ for each of the primes in this appended interval. Thus we can cover all of $\mathcal{R}_m$ by residue classes for primes in an interval of length $(2\epsilon+\delta)|\mathcal{R}_m|\log{x}$. Since $\epsilon$ and $\delta$ were arbitrary, we see these two forms of Proposition \ref{prpstn:MainProp} are equivalent.
\section{The probabilistic method}\label{sec:Probabilistic}
Given an even $m<U/(z(\log_2{x})^2)$ and a prime $q\in\mathcal{I}_m$, we will define a probability measure $\mu_{m,q}$ on the residue classes $a\pmod{q}$. We then consider the following situation: independently for each prime $q\in\mathcal{I}_m$, we randomly choose a residue class $a\pmod{q}$ with probability $\mu_{m,q}(a)$.

Given a prime $p\in\mathcal{R}_m$, we see that the probability that $p$ is not in any of the chosen residue classes for any $q\in\mathcal{I}_m$ is
\begin{align}
\prod_{q\in\mathcal{I}_m\text{ prime}}(1-\mu_{m,q}(p))&=\exp\Bigl(\sum_{q\in\mathcal{I}_m\text{ prime}}\log{(1-\mu_{m,q}(p))}\Bigr)\nonumber \\
&\le \exp\Bigl(-\sum_{q\in\mathcal{I}_m\text{ prime}}\mu_{m,q}(p)\Bigr).\label{eq:ProbBound}
\end{align}
Therefore if for almost every $p\in\mathcal{R}_m$ we have that the expected number of $q\in\mathcal{I}_m$ for which the residue class $p\pmod{q}$ is chosen is at least $t$, then the probability that any such $p\in\mathcal{R}_m$ is not in any of the chosen residue classes is less than $e^{-t}$. Therefore the expected number of primes in $\mathcal{R}_m$ which are not in any of the chosen residue classes is at most $e^{-t}|\mathcal{R}_m|$. If $t$ can be taken sufficiently large, we expect that all but at most $\epsilon|\mathcal{R}_m|$ elements of $\mathcal{R}_m$ are in at least one of the chosen residue classes. This means that there must be at least one configuration of residue classes $a\pmod{q}$ for $q\in\mathcal{I}_m$ which covers all but at most $\epsilon|\mathcal{R}_m|$ elements of $\mathcal{R}_m$, as required.
\section{GPY Probabilities}
We have seen that to complete the argument we require a probability measure $\mu_{m,q}$ for each prime $q\in\mathcal{I}_m$, such that for almost every $p\in\mathcal{R}_m$ the expected number $\sum_{q\in\mathcal{I}_m}\mu_{m,q}(p)$ of times  the residue class $p\pmod{q}$ is chosen is at least $t$, where $t$ can be taken to be arbitrarily large.

We wish $\mu_{m,q}(a)$ to be large when the residue class $a\pmod{q}$ contains many primes in $\mathcal{R}_m$, and small otherwise. The key feature in this situation is that the modulus $q$ is only slightly smaller than the size of the elements of $\mathcal{R}_m$, which makes it difficult to count the number of primes in a given residue class. To achieve such a measure, we adapt the weights used in \cite{Maynard, MaynardII} to this situation, so that $\mu_{m,q}(a)$ is large when $a\pmod{q}$ contains many elements with no small prime factors.

Specifically, first we choose an admissible set $\mathcal{H}=\{h_1,\dots,h_k\}$, with $h_j=p_{\pi(k)+j}P_{w}$ for each $j=1,\dots,k$ (i.e. $h_j$ is the $j^{th}$ prime greater than $k$, multiplied by all primes less than $w$). Here $w$ is a quantity which will go to infinity slowly with $x$, such that $P_w=o(\log_2{x})$ (we could take $w=\log_4{x}$, for example), and $k$ is a constant we will choose to be sufficiently large in terms of $\epsilon,\delta,C_U$. In particular $w$ will be large compared with $k$. We define
\begin{equation}
\mu_{m,q}(a)=\alpha_{m,q}\sum_{\substack{n\le U/m\\n\equiv a\pmod{q}\\ (n(m n-1),P_w)=1}}\Bigl(\sum_{\substack{d_1,\dots,d_k\\ d_i|n+h_i q}}\sum_{\substack{e_1,\dots,e_k\\ e_i|m(n+h_i q)-1}}\lambda_{d_1,\dots,d_k,e_1,\dots,e_k}\Bigr)^2.
\end{equation}
Here $\lambda_{d_1,\dots,d_k,e_1,\dots,e_k}$ are real constants (that we will choose later), and $\alpha_{m,q}$ is a normalizing constant so that $\sum_{a\pmod{q}}\mu_{m,q}(a)=1$.

The coefficients $\lambda_{d_1,\dots,d_k,e_1\dots,e_k}$ will factorize as $\lambda^{(1)}_{d_1,\dots,d_k}\prod_{i=1}^k\lambda_{e_i}^{(2)}$. The $\lambda^{(1)}_{d_1,\dots,d_k}$ correspond to a `GPY'  sieve, and ensure that $\mu_{m,q}(a)$ can only be large if there exists an $n\equiv a\pmod{q}$ such that all of $\{n+h_1q,\dots,n+h_k q\}$ have no small prime factors (and so we expect many of them to be prime). The $\lambda^{(2)}_{e_j}$ correspond to a standard Selberg sieve\footnote{We could apply a fundamental lemma type sieve here since $y=x^{o(1)}$, but we find it more convenient to apply a Selberg sieve, which is weaker only by an unimportant constant factor.}, and ensure that the contribution from such an $n$ is small unless $m(n+h_j q)-1$ has no prime factors less than $y^\epsilon$.

If we choose a residue class $a\pmod{q}$ randomly with probability $\mu_{m,q}(a)$, then for a suitable choice of $\lambda$ coefficients, we would find from following the work \cite{Maynard, MaynardII,Polymath} that the expected number of primes in $\mathcal{R}_m$ in the chosen residue class would be a constant multiple of $\log{k}$. One might hope that the primes found this way would be approximately independent for different $q\in\mathcal{I}_m$. If this were the case, then we would guess that the expected number of times a given prime in $\mathcal{R}_m$ would be in a picked residue class would be roughly the same for all primes in $\mathcal{R}_m$, in which case this would be approximately $(|\mathcal{I}_m|\log{k})/(|\mathcal{R}_m|\log{x})$, since there are roughly $|\mathcal{I}_m|/\log{x}$ primes in $\mathcal{I}_m$. Recalling that we choose $|\mathcal{I}_m|=\delta|\mathcal{R}_m|\log{x}$, we might therefore guess that the expected number of times $p\in\mathcal{R}_m$ is chosen is roughly a constant multiple of $\delta \log{k}$. (Normally this would actually depend on the arithmetic structure of $\mathcal{H},m,p_0$, but by choosing all elements of $\mathcal{H}$ to be a multiple of all small primes this effect is negligible.) Therefore, if $k$ is chosen sufficiently large, we expect to be able to make this quantity larger than any fixed constant. We now proceed to make these heuristic ideas rigorous.

In order for it to be feasible to estimate $\sum_q\mu_{m,q}(p)$, we exploit the linearity (in $n$ and $q$) of the expressions $n+h_i q$ and $m(n+h_i q)-1$, and make a choice of $\lambda_{d_1,\dots,d_k,e_1,\dots,e_k}$ which is independent of $q$. This allows us to estimate the resulting sums for fixed $n$ and varying prime $q$ and also for fixed $q$ and varying $n$. In particular, this makes it more convenient to adopt the `analytic' method for estimating the sums which appear, as in \cite{Polymath}.
\section{Setup}
We let
\begin{equation}
\begin{split}
\omega_{m,q}(p)=&\#\{1\le n\le p:n+h_i q\equiv 0\pmod{p}\\
&\qquad\text{ or }m(n+h_i q)\equiv 1\pmod{p}\text{ for some }1\le i\le k\},\label{eq:OmegaDef}
\end{split}
\end{equation}
and we extend $\omega_{m,q}$ to a totally multiplicative function defined on $\mathbb{N}$. Similarly, we define the totally multiplicative function $\phi_{m,q}$ by $\phi_{m,q}(p)=p-\omega_{m,q}(p)$. We put
\begin{equation}
\mathfrak{S}_{m,q}=\prod_{p\le y}\Bigl(1-\frac{\omega_{m,q}(p)}{p}\Bigr)\Bigl(1-\frac{1}{p}\Bigr)^{-2k},\label{eq:Smq}
\end{equation}
noting that this product is non-zero since $\omega_{m,q}(p)\le 2$ for $p\le w$ and $\omega_{m,q}(2)=1$ since we are only considering $m$ even.

We define $\lambda_{d_1,\dots,d_k,e_1\dots,e_k}$ by
\begin{equation}
\lambda_{d_1,\dots,d_k,e_1,\dots,e_k}=\Bigl(\prod_{i=1}^k\mu(d_i)\mu(e_i)\Bigr)\sum_{j=1}^J \Bigl(\prod_{\ell=1}^k F_{\ell,j}\Bigl(\frac{\log{d_\ell}}{\log{x}}\Bigr)G\Bigl(\frac{\log{e_\ell}}{\log{y}}\Bigr)\Bigr),\label{eq:LambdaDef}
\end{equation}
for some smooth non-negative functions $F_{i,j}$, $G:[0,\infty)\rightarrow\mathbb{R}$ which are not identically zero (which we declare later). The functions $F_{i,j},G$ and the quantity $J$ will be allowed to depend on $k$, but will be independent of $x,q$. Thus in particular $|\lambda_{d_1,\dots,d_k,e_1,\dots,e_k}|\ll_k 1$. We further require that for each $j\in\{1,\dots,J\}$, we have \begin{equation}
\sup\Bigl\{\sum_{i=1}^k u_i:F_{i,j}(u_i)\ne 0\Bigr\}\le 1/10,
\end{equation}
and we restrict $G$ to be supported on $[0,1]$. Finally, we put
\begin{equation}
F(t_1,\dots,t_k)=\sum_{j=1}^J\prod_{\ell =1}^k F'_{\ell,j}(t_\ell),
\end{equation}
and we assume that the $F_{\ell,j}$ are chosen such that $F$ is symmetric. We emphasize that this choice of $\lambda$ does not depend on $q$.

\section{Sieve estimates}
We first asymptotically evaluate the normalizing constant $\alpha_{m,q}$, and then estimate $\sum_{q\in\mathcal{I}_m}\mu_{m,q}(p)$.
\begin{lmm}\label{lmm:ConstSum}
We have
\[\alpha_{m,q}^{-1}= (1+o_k(1))\frac{U\mathfrak{S}_{m,q}}{m(\log{x})^{k}(\log{y})^{k}}I_k^{(1)}(F)I_k^{(2)}(G),\]
where $\mathfrak{S}_{m,q}$ is given by \eqref{eq:Smq}, and
\[I^{(1)}_k(F)=\idotsint\limits_{t_1,\dots,t_k\ge 0} F(t_1,\dots,t_k)^2dt_1\dots dt_k,\qquad I_k^{(2)}(G)=\Bigl(\int_0^\infty G'(t)^2dt\Bigr)^{k}.\]
\end{lmm}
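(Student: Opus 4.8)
The plan is to expand the square defining $\mu_{m,q}(a)$, sum over the relevant residue classes $a\pmod q$, and reduce the computation of $\alpha_{m,q}^{-1}=\sum_{a\pmod q}\alpha_{m,q}^{-1}\mu_{m,q}(a)$ to a sum over $n\le U/m$ with $(n(mn-1),P_w)=1$ of the diagonalized sieve weight. Concretely, writing $S=\sum_{n\le U/m,\,(n(mn-1),P_w)=1}\bigl(\sum_{d_i\mid n+h_iq,\,e_i\mid m(n+h_iq)-1}\lambda_{d_1,\dots,d_k,e_1,\dots,e_k}\bigr)^2$, I would open the square into a double sum over $(\mathbf d,\mathbf e)$ and $(\mathbf d',\mathbf e')$, swap the order of summation so that the inner sum is over $n$ in a fixed progression modulo $\mathrm{lcm}$ of all the $[d_i,d_i']$ and $[e_i,e_i']$ (these moduli are coprime to each other and to $q$ for the surviving terms, using that $\mathcal H$ is admissible, that the $h_i$ are divisible by all primes below $w$, that $d_i,d_i'$ are squarefree and coprime to $P_w$, and that $q$ is much larger than all the $d$'s and $e$'s so $q\nmid$ anything), and evaluate that inner sum as a main term $(U/m)\prod(\text{local densities})$ plus an error.

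The next step is the standard GPY/Selberg diagonalization. Because $\lambda_{\mathbf d,\mathbf e}$ factorizes as $\lambda^{(1)}_{\mathbf d}\prod_i\lambda^{(2)}_{e_i}$ with $\mu$-factors and smooth cutoffs, the change of variables $y_{\mathbf r}^{(1)}$, $y_s^{(2)}$ familiar from Maynard's work converts the quadratic form into a diagonal one; since the $d_i$-variables and the $e_i$-variables are essentially independent (the linear forms $n+h_iq$ and $m(n+h_iq)-1$ share no small prime factors, the former being coprime to $P_w$ by the summation condition and the $\mathfrak S_{m,q}$ normalization absorbing the $p\le y$ densities), the $k$-dimensional $F$-part separates from the one-dimensional $G$-part raised to the $k$-th power. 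After replacing sums by integrals via Mertens' theorem and the smoothness/support hypotheses on $F_{i,j},G$, the $F$-part produces $(\log x)^{-k}$ times $I_k^{(1)}(F)=\idotsint F^2$ (here the passage from $\sum\mu^2(d)/\phi(d)\cdots$ to the integral of $F'^2$ is the usual one, and symmetry of $F$ is used), and the $G$-part produces $(\log y)^{-k}$ times $(\int_0^\infty G'(t)^2\,dt)^k=I_k^{(2)}(G)$, with the arithmetic factor $\mathfrak S_{m,q}$ and the volume factor $U/m$ out front, giving exactly the claimed shape with an $o_k(1)$ relative error.

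The main obstacle is controlling the error term in the inner sum over $n$: the modulus of the progression is a product of many $d_i,d_i',e_i,e_i'$ and can be as large as $y^{O(k)}$, while the range of $n$ is only $U/m\ge x/(\log x)^{O(1)}$, so one cannot afford to lose a factor comparable to the modulus. This is exactly where the support restrictions $\sup\{\sum_i u_i:F_{i,j}(u_i)\ne 0\}\le 1/10$ and $\mathrm{supp}(G)\subseteq[0,1]$ with $y=x^{o(1)}$ are essential: together they force every surviving $\prod_i d_id_i'\le x^{1/5}$ and $\prod_i e_ie_i'\le y^{2k}=x^{o(1)}$, so the total modulus is at most $x^{1/4}$, say, and the trivial divisor-function bound on the number of $(\mathbf d,\mathbf e,\mathbf d',\mathbf e')$ with given lcm, combined with $|\lambda_{\mathbf d,\mathbf e}|\ll_k1$, makes the accumulated error $O_k(x^{1/2+o(1)})=o_k(U/m\cdot(\log x)^{-k}(\log y)^{-k})$. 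A secondary technical point is verifying that the local factors at primes $p\le w$ (where $\omega_{m,q}(p)\le2$ and $\omega_{m,q}(2)=1$) and at primes $w<p\le y$ behave as required so that $\mathfrak S_{m,q}\ne0$ and the Euler products converge; this is routine given the definitions of $\omega_{m,q},\phi_{m,q},\mathfrak S_{m,q}$ already in place, and the restriction to even $m$ guaranteeing $\omega_{m,q}(2)=1$ rather than $2$ keeps the product nonzero.
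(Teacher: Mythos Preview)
Your overall outline is correct and matches the paper: expand the square, swap summation so the inner sum is over $n$ in a residue class modulo $[\mathbf d,\mathbf d',\mathbf e,\mathbf e']P_w$, split into main term plus $O(1)$ error, and kill the accumulated error using the support restrictions $\prod_i d_id_i'\le x^{1/5}$ and $\prod_i e_ie_i'\le y^{2k}=x^{o(1)}$ together with $|\lambda_{\mathbf d,\mathbf e}|\ll_k 1$. The paper then evaluates the main term by the \emph{analytic} route (Fourier inversion of $F_{\ell,j}$ and $G$, Euler product $\prod_p K_p$, comparison with $\zeta$-factors), whereas you propose the combinatorial $y$-variable diagonalization; either method is viable here, and the paper in fact remarks that its choice of the analytic method is one of convenience.

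There is, however, a genuine gap in your justification of the main term. Your claim that ``the $d_i$-variables and the $e_i$-variables are essentially independent'' because ``the linear forms $n+h_iq$ and $m(n+h_iq)-1$ share no small prime factors'' is only true for the \emph{same} index $i$. For $i\ne j$ and a prime $w<p\le y$, the conditions $p\mid n+h_iq$ and $p\mid m(n+h_jq)-1$ are compatible precisely when $p\mid mq(h_j-h_i)-1$, so $(d_id_i',e_je_j')$ can be nontrivial. In the paper's Euler-product analysis these primes produce an extra factor at $p$ equal to $1+\#\{(j,\ell):p\mid mq(h_\ell-h_j)-1\}/p$ up to acceptable error, and since $\omega_{m,q}(p)=2k-\#\{(j,\ell):p\mid mq(h_\ell-h_j)-1\}$ this is exactly what assembles into the correct $\mathfrak S_{m,q}$; the separation into $I_k^{(1)}(F)I_k^{(2)}(G)$ only emerges \emph{after} this cross-coupling has been absorbed into the singular series. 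If you run the diagonalization pretending the $d$'s and $e$'s are independent, you obtain a singular series of the shape $\prod_{w<p\le y}(1-k/p)^2(1-1/p)^{-2k}$ rather than $\prod_{w<p\le y}(1-\omega_{m,q}(p)/p)(1-1/p)^{-2k}$, and these differ at the bad primes. To repair your argument you must carry the coprimality condition $(d_id_i',e_je_j')\mid (mq(h_j-h_i)-1,P_y)$ through the diagonalization (or through an Euler product) and then show, as the paper does, that the product over the bad primes is $1+o_k(1)$ times the naive factor, using $\prod_{h,h'\in\mathcal H}(mq(h-h')-1)\ll x^{O_k(1)}$ and $p>w\to\infty$.
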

\begin{proof}
The quantity $\alpha_{m,q}^{-1}$ is somewhat analogous to that of \cite[Proposition 9.1]{MaynardII}, although here we do not concern ourselves with uniformity in $k$. From the fact that we have defined $\alpha_{m,q}$ to be such that $\sum_{a\pmod{q}}\mu_{m,q}(a)=1$, we have that
\begin{align}
\alpha_{m,q}^{-1}=\sum_{\substack{n\le U/m\\ (n(mn-1),P_w)=1}}\Bigl(\sum_{\substack{d_1,\dots,d_k\\ d_i|n+h_iq}}\sum_{\substack{e_1,\dots,e_k\\ e_i|m(n+h_iq)-1}}\lambda_{d_1,\dots,d_k,e_1,\dots,e_k}\Bigr)^2.
\end{align}
Expanding the squares and swapping the order of summation shows this sum is equal to
\begin{align}
\sum_{\substack{d_1,\dots,d_k\\ d_1',\dots,d_k'}}\sum_{\substack{e_1,\dots,e_k\\ e_1',\dots,e_k'}}\lambda_{\mathbf{d},\mathbf{e}}\lambda_{\mathbf{d}',\mathbf{e}'}\sum_{\substack{n\le U/m\\ (n(m n-1),P_w)=1\\ [d_i,d_i']|n+h_i q\,\forall i\\ [e_i,e_i']|m(n+h_i q)-1\,\forall i}}1.\label{eq:ConstExpanded}
\end{align}
Here to ease notation we have written $\lambda_{\mathbf{d},\mathbf{e}}$ for $\lambda_{d_1,\dots,d_k,e_1\dots,e_k}$, and similarly for $\lambda_{\mathbf{d}',\mathbf{e}'}$. 

We concentrate on the inner sum. There is no contribution unless all of $d_1d_1'$, $\dots$, $d_k d_k'$, $e_1e_1'$, $\dots$, $e_k e_k'$ are coprime to $P_{w}$. Moreover, there is no contribution unless all of $d_1d_1',\dots,d_k d_k'$ are pairwise coprime, and all of $e_1e_1',\dots,e_k e_k'$ are pairwise coprime (since any $p|(e_i e_i',e_j e_j')$, say, would have to divide $(h_i-h_j)q$, which is in contradiction to $p\le x^{1/10}<q$ being prime and $(e_i e_i',P_{w})=1$). Finally, we see that we must have $(d_i d_i',e_j e_j')|m q(h_j-h_i)-1$ for all $1\le i, j\le k$. If all of these conditions are satisfied, then, by the Chinese remainder theorem, we can combine the divisibility conditions in the inner sum to restrict $n$ to lie in any of $\phi_{m,q}(P_{w})$  residue classes $\pmod{P_{w}[d_1,d_1',e_1,e_1',\dots,d_k,d_k',e_k,e_k']}$.

Thus we see that \eqref{eq:ConstExpanded} is given by
\begin{align}
\sideset{}{'}\sum_{\substack{d_1,\dots,d_k\\ d_1',\dots,d_k'}}\sideset{}{'}\sum_{\substack{e_1,\dots,e_k\\ e_1',\dots,e_k'}}\lambda_{\mathbf{d},\mathbf{e}}\lambda_{\mathbf{d}',\mathbf{e}'}\Bigl(\frac{U\phi_{m,q}(P_{w})}{m[\mathbf{d},\mathbf{d}',\mathbf{e},\mathbf{e}']P_{w}}+O(\phi_{m,q}(P_{w}))\Bigr),
\end{align}
where we have written $[\mathbf{d},\mathbf{d}',\mathbf{e},\mathbf{e}']$ for $[d_1,d_1',e_1,e_1',\dots,d_k,d_k',e_k,e_k']$, and $\sum'$ for the conditions that $d_1 d_1',\dots,d_k d_k',P_{w}$ are pairwise coprime, $e_1e_1'$, $\dots$, $e_k e_k'$, $P_{w}$ are pairwise coprime, and that $(e_i e_i',d_j d_j')|(m q(h_j-h_i)-1,P_y)$ for all $1\le i, j\le k$. The fact we have forced this common factor to divide $P_y$ is redundant since $\lambda_{\mathbf{d},\mathbf{e}}$ is supported only on $e_i\le y$, but is slightly convenient later.

We first estimate the error term trivially. We have $\sup_{\mathbf{d},\mathbf{e}}(|\lambda_{\mathbf{d},\mathbf{e}}|)\ll_k 1$. We write $d=\prod_{i=1}^k d_i$ (and similarly for $d',e,e'$), and see that the support conditions of $F_{i,j},G$ mean we only need to consider $d,d'\le x^{1/10}$ and $e,e'\le y^k\ll_k x^\epsilon$. Therefore the error term contributes
\begin{equation}
\ll_k \phi_{m,q}(P_{w})\sum_{d,d'\le x^{1/10}, e,e'\ll_k x^\epsilon}\tau_k(d)\tau_k(d')\tau_k(e)\tau_k(e')\ll_k x^{1/2}.
\end{equation}
We now estimate the main term. The argument we use is standard, and is a minor adaption of \cite[Lemma 4.1]{Polymath}. 

We expand $\lambda_{\mathbf{d},\mathbf{e}}$, $\lambda_{\mathbf{d}',\mathbf{e}'}$ using \eqref{eq:LambdaDef}. Thus we are left to evaluate
\begin{align}
\sum_{j=1}^J\sum_{j'=1}^J \sideset{}{'}\sum_{\substack{d_1,\dots,d_k\\ d_1',\dots,d_k'}}\sideset{}{'}\sum_{\substack{e_1,\dots,e_k\\ e_1',\dots,e_k'}}\frac{\prod_{\ell =1}^k\mu(d_\ell)\mu(d_\ell')\mu(e_\ell)\mu(e_\ell')H_{\ell,j,j'}(d_\ell,d'_\ell,e_\ell,e'_\ell)}{[\mathbf{d},\mathbf{d}',\mathbf{e},\mathbf{e}']},\label{eq:ConstBasicMainTerm}
\end{align}
where
\[
H_{\ell,j,j'}(d_\ell,d'_\ell,e_\ell,e'_\ell)=F_{\ell ,j}\Bigl(\frac{\log{d_\ell}}{\log{x}}\Bigr)F_{\ell ,j'}\Bigl(\frac{\log{d_\ell'}}{\log{x}}\Bigr)G\Bigl(\frac{\log{e_\ell}}{\log{y}}\Bigr)G\Bigl(\frac{\log{e_\ell'}}{\log{y}}\Bigr).
\]
The function $e^{t}F_{\ell,j}(t)$ can be extended to a smooth compactly supported function on $\mathbb{R}$, and so has a Fourier expansion $e^{t}F_{\ell,j}(t)=\int_\mathbb{R}e^{-it\xi}f_{\ell ,j}(\xi)d\xi$, for a function $f_{\ell,j}$ which (from the smoothness of $e^t F_{\ell,j}(t)$ and integration by parts) satisfies $f_{\ell,j}(\xi)\ll_{k,A} (1+|\xi|)^{-A}$ for any $A>0$, and so is rapidly decreasing. In particular, we have
\begin{equation}
F_{\ell,j}\Bigl(\frac{\log{d_\ell}}{\log{x}}\Bigr)=\int_{\mathbb{R}}\frac{f_{\ell,j}(\xi_{\ell})}{d_\ell^{(1+i\xi_{\ell})/\log{x}}}d\xi_{\ell}.
\end{equation}
We obtain an analogous expression for $G$. Thus the sum over the $d$, $d'$, $e$, and $e'$ variables in \eqref{eq:ConstBasicMainTerm} can then be rewritten as
\begin{align}
&\int_{\mathbb{R}}\dotsi\int_{\mathbb{R}}\Biggl(\sideset{}{'}\sum_{\substack{d_1,\dots,d_k\\ d_1',\dots,d_k'}}\sideset{}{'}\sum_{\substack{e_1,\dots,e_k\\ e_1',\dots,e_k'}}\frac{1}{[\mathbf{d},\mathbf{d}',\mathbf{e},\mathbf{e}']}\prod_{\ell=1}^k\frac{\mu(d_\ell)\mu(d_\ell')\mu(e_\ell)\mu(e_\ell')}{d_\ell^{\frac{1+i\xi_\ell}{\log{x}}}(d_\ell')^{\frac{1+i\xi_\ell'}{\log{x}}}e_\ell^{\frac{1+i\tau_\ell}{\log{y}}}(e_\ell')^{\frac{1+i\tau_\ell'}{\log{y}}}}\Biggr)\nonumber\\
&\qquad\times\Bigl(\prod_{\ell =1}^k f_{\ell,j}(\xi_\ell)f_{\ell,j'}(\xi_{\ell}')g(\tau_\ell)g(\tau_\ell')d\xi_\ell d\xi_\ell'd\tau_\ell d\tau_\ell'\Bigr).
\end{align}
Here we have swapped the order of summation and integration (noting the expression is absolutely convergent).

We concentrate on the first term in parentheses in the integral. Since the restrictions imposed on the summation are multiplicative and the summand is also multiplicative, we can rewrite the sum as a product $\prod_{p}K_p$ for functions $K_p(\xi_1\dots,\xi_k,\xi_1',\dots,\xi_k',\tau_1,\dots,\tau_k,\tau_1',\dots,\tau_k')$. We first notice that
\begin{align}
K_p&=\sideset{}{'}\sum_{\substack{d_1,\dots,d_k\\ d_1',\dots,d_k'}}\sideset{}{'}\sum_{\substack{e_1,\dots,e_k\\ e_1',\dots,e_k'\\ [\mathbf{d},\mathbf{d}',\mathbf{e},\mathbf{e}']|p}}\frac{1}{[\mathbf{d},\mathbf{d}',\mathbf{e},\mathbf{e}']}\prod_{\ell=1}^k\frac{\mu(d_\ell)\mu(d_\ell')\mu(e_\ell)\mu(e_\ell')}{d_\ell^{\frac{1+i\xi_\ell}{\log{x}}}(d_\ell')^{\frac{1+i\xi_\ell'}{\log{x}}}e_\ell^{\frac{1+i\tau_\ell}{\log{y}}}(e_\ell')^{\frac{1+i\tau_\ell'}{\log{y}}}}\nonumber\\
&=1+O_k(p^{-1-1/\log{x}}).\end{align}
Thus $\prod_{p}K_p\ll (\log{x})^{O_k(1)}$. Since all the $f,g$ functions are rapidly decreasing, this means that we can restrict the integral to $|\xi_\ell|$, $|\xi_\ell'|$, $|\tau_\ell|$, $|\tau_\ell'|\le \sqrt{\log{x}}$ for all $\ell$ at the cost of an error $O_k((\log{x})^{-2k})$. To ease notation we let $s_j=(1+i\xi_j)/\log{x}$, $r_\ell=(1+i\tau_\ell)/\log{y}$ and similarly for $s_j',r_\ell'$.

For $w<p\le y$ with $p\nmid \prod_{h,h'\in\mathcal{H}}(m q(h-h')-1)$, or for $p>y$, we have 
\begin{align}
K_p&=\Bigl(1+O_k\Bigl(\frac{1}{p^{2}}\Bigr)\Bigr)\prod_{\ell=1}^k\frac{\Bigl(1-p^{-1-s_\ell}\Bigr)\Bigl(1-p^{-1-s_\ell'}\Bigr)\Bigl(1-p^{-1-r_\ell}\Bigr)\Bigl(1-p^{-1-r_\ell'}\Bigr)}{\Bigl(1-p^{-1-s_\ell-s_\ell'}\Bigr)\Bigl(1-p^{-1-r_\ell-r_\ell'}\Bigr)}.\label{eq:GoodSmallPrimes}
\end{align}
For $w<p\le y$ with $p| \prod_{h,h'\in\mathcal{H}}(m q(h-h')-1)$, we have terms involving the product of $d_j$ and $e_\ell$ if $p|m q(h_\ell-h_j)-1$. This means that we have an additional factor compared with \eqref{eq:GoodSmallPrimes} of
\begin{align}
%\prod_{j,\ell:\, p|m q(h_\ell-h_j)-1}&\frac{\Bigl(1+O_k(p^{-2})\Bigr)\Bigl(1-p^{-1-s_j-s_{j}'-r_\ell}\Bigr)\Bigl(1-p^{-1-s_j-s_{j}'-r_\ell'}\Bigr)\Bigl(1-p^{-1-s_j-r_{\ell}-r_\ell'}\Bigr)\Bigl(1-p^{-1-s_{j}'-r_\ell-r_\ell'}\Bigr)}{\Bigl(1-p^{-1-s_j-r_\ell}\Bigr)\Bigl(1-p^{-1-s_j-r_\ell'}\Bigr)\Bigl(1-p^{-1-s_j'-r_{\ell}}\Bigr)\Bigl(1-p^{-1-s_{j}'-r_\ell'}\Bigr)\Bigl(1-p^{-1-s_j-s_{j}'-r_\ell-r_\ell'}\Bigr)}\nonumber\\
&\Bigl(1+O_k(p^{-2})\Bigr)\prod_{j,\ell:\, p|m q(h_\ell-h_j)-1}\Bigl(1+\sum_{\substack{\mathcal{T}\subseteq\{s_j,s_j',r_\ell,r_\ell'\}\\ \mathcal{T}\cap\{s_j,s_j'\}\ne \emptyset\\\mathcal{T}\cap\{r_j,r_j'\}\ne \emptyset}}(-1)^{\#\mathcal{T}}p^{-\sum_{t\in\mathcal{T}}t}\Bigr)\nonumber\\
&=\Bigl(1+\frac{\#\{j,\ell:\,p|m q(h_\ell-h_j)-1\}}{p}\Bigr)\Bigl(1+O_k\Bigl(\frac{1}{p^{2}}+\frac{\log{p}\sqrt{\log{x}}}{p\log{y}}\Bigr)\Bigr).\label{eq:BadPrimes}
\end{align}
Here we used the fact that we have restricted to $|s_j|$, $|r_\ell|<(\log{x})^{1/2}/\log{y}$ (so, for example, $p^{-s_j}=1+O((\log{p})(\log{x})^{1/2}/(\log{y}))$ by Taylor expansion) to obtain the final error term.

For such $p$, we see that the $h_i q\pmod{p}$ are all distinct ($p>w$ implies $p\nmid \prod_{j\ne\ell}(h_j-h_\ell)$ and $p\le y<q$ implies $p\nmid q$). Therefore, recalling the definition \eqref{eq:OmegaDef} of $\omega_{m,q}(p)$, we see that $\omega_{m,q}(p)=2k-\#\{j,\ell:\, p|m q(h_\ell-h_j)-1\}$, and so the first factor in \eqref{eq:BadPrimes} simplifies to $1-(\omega_{m,q}(p)-2k)/p$.

We see that since $\prod_{h,h'\in\mathcal{H}}(m q(h-h')-1)\ll x^{O_k(1)}$, and $\log{y}>(\log{x})^{1-\epsilon}$, we have
\begin{align}
\prod_{\substack{p|\prod_{h,h'}(m q(h-h')-1)\\p>w}}&\Bigl(1+O_k\Bigl(\frac{1}{p^2}+\frac{\log{p}\sqrt{\log{x}}}{p\log{y}}\Bigr)\Bigr)\nonumber\\
&=\exp\Bigl(O_k\Bigl(w^{-1}+\frac{\log\log{x}}{(\log{x})^{1/2-\epsilon}}\Bigr)\Bigr)\nonumber\\
&=1+o_k(1),
\end{align}
and so the second factor in \eqref{eq:BadPrimes} has a negligible effect. 

Finally, we see that since 
\[\log{w}=o((\log{x})^\epsilon)\text{ and }s_\ell,s_\ell',r_\ell,r_\ell'=o(\log{x})^{-1/2+\epsilon},\]
 we have
\begin{multline}
\prod_{p\le w}\prod_{\ell=1}^k\frac{\Bigl(1-p^{-1-s_\ell}\Bigr)\Bigl(1-p^{-1-s_\ell'}\Bigr)\Bigl(1-p^{-1-r_\ell}\Bigr)\Bigl(1-p^{-1-r'_\ell}\Bigr)}{\Bigl(1-p^{-1-s_\ell-s_\ell'}\Bigr)\Bigl(1-p^{-1-r_\ell-r_\ell'}\Bigr)}\\
=(1+o_k(1))\prod_{p<w}\Bigl(1-\frac{1}{p}\Bigr)^{2k}.
\end{multline}
Putting this all together gives
\begin{align}
\prod_{p>w}K_p&=(1+o_k(1))\prod_{p<w}\Bigl(1-\frac{1}{p}\Bigr)^{-2k}\prod_{w<p\le y}\Bigl(1-\frac{\omega_{m,q}(p)-2k}{p}\Bigr)\nonumber\\
&\qquad\times\prod_{\ell=1}^k\frac{\zeta\Bigl(1+\frac{2+i\xi_\ell+i\xi_\ell'}{\log{x}}\Bigr)\zeta\Bigl(1+\frac{2+i\tau_\ell+i\tau_\ell'}{\log{y}}\Bigr)}{\zeta\Bigl(1+\frac{1+i\xi_\ell}{\log{x}}\Bigr)\zeta\Bigl(1+\frac{1+i\xi_\ell'}{\log{x}}\Bigr)\zeta\Bigl(1+\frac{1+i\tau_\ell}{\log{y}}\Bigr)\zeta\Bigl(1+\frac{1+i\tau_\ell'}{\log{y}}\Bigr)}.
\end{align}
Here we have extended the product of $1-(\omega_{m,q}(p)-2k)/p$ to all primes $w<p\le y$, which is valid since $\omega_{m,q}(p)=2k$ if $p\nmid \prod_{h,h'}(mq(h-h')-1)$ for such $p$.

In the region $|z|=o(1)$, we have the estimate $\zeta(1+z)=(1+o(1))/z$. Thus, recalling that $\log{y}\ge (\log{x})^{1-\epsilon}$, we are left to estimate
\begin{align}
\idotsint(1+o_k(1))&\prod_{\ell =1}^k\Biggl(\frac{(1+i\xi_\ell)(1+i\xi_\ell')(1+i\tau_\ell)(1+i\tau_\ell')}{(2+i\xi_\ell+i\xi_\ell')(2+i\tau_\ell+i\tau_\ell')}\nonumber\\
&\times  f_{\ell,j}(\xi_\ell)f_{\ell,j'}(\xi_{\ell}'g(\tau_\ell)g(\tau_\ell'))d\xi_\ell d\xi_\ell'd\tau_\ell d\tau_\ell'\Biggr),
\end{align}
where the integral is over $|\xi_\ell|$, $|\xi'_\ell|$, $|\tau_\ell|$, $|\tau'_\ell|\le \sqrt{\log{x}}$. From the rapid decay of the $f$ and $g$ functions, we see that the $o_k(1)$ term contributes $o_k(1)$ in total, and we can extend the integrals to being over $\mathbb{R}$ at a cost of $o_k(1)$. Thus it suffices (since the integrals are absolutely convergent) to show that for any $f_1,f_2$ amongst the $f_{\ell,j}$, $g$ we have
\begin{align}
\int_\mathbb{R}\int_\mathbb{R}\frac{(1+i\xi)(1+i\xi')}{2+i\xi+i\xi'}f_1(\xi)f_2(\xi')d\xi d\xi'=\int_0^\infty F_1'(t)F_2'(t)dt.
\end{align}
This follows immediately from our definition of the Fourier transform (differentiating under the integral sign is valid due to absolute convergence).

Putting everything together, we have that
\begin{align}
\alpha_{m,q}^{-1}&=\frac{(1+o_k(1))U\phi_{m,q}(P_w)}{m(\log{x})^k(\log{y})^k P_w}\prod_{p\le w}\Bigl(1-\frac{1}{p}\Bigr)^{-2k}\prod_{w\le p\le y}\Bigl(1-\frac{\omega_{m,q}(p)-2k}{p}\Bigr)\nonumber\\
&\qquad \times\Bigl(\int_0^\infty G'(t)^2dt\Bigr)^{k}\sum_{j=1}^J\sum_{j'=1}^J\prod_{\ell=1}^k\Bigl(\int_0^\infty F_{\ell,j}'(t)F_{\ell,j'}'(t)dt\Bigr)\nonumber\\
&=(1+o_k(1))\frac{U\mathfrak{S}_{m,q}I_k(F)}{m(\log{x})^k(\log{y})^k }I_k^{(1)}(F)I_k^{(2)}(G).
\end{align}
Here we have used the fact that $G$ and the $F_{\ell,j}$ are all non-negative (and not the zero function) to take the $o(1)$ errors as a factor at the front of the expression.
\end{proof}
\begin{lmm}\label{lmm:PrimeSum}
Let $m<U z^{-1}(\log{x})^{-2}$ be even and let $p_0\in\mathcal{R}_m$ with $h_k x<p_0<U/m-h_k x$. Then
\[\sum_{q\in\mathcal{I}_m}\mu_{m,q}(p_0)\gg (1+o_k(1))\frac{k|\mathcal{I}_m|J_k^{(1)}(F)J_k^{(2)}(G)}{(\log{x})|\mathcal{R}_m| I_k^{(1)}(F)I_k^{(2)}(G)},\]
where
\begin{align*}
J_k^{(1)}(F)&=\idotsint\limits_{t_1,\dots,t_{k-1}\ge 0}\Bigl(\int_{t_k\ge 0}F(t_1,\dots,t_k)dt_k\Bigr)^2dt_1\dots dt_{k-1},\\
J_k^{(2)}(G)&=G(0)^2\Bigl(\int_0^\infty G'(t)^2dt\Bigr)^{k-1}.
\end{align*}
\end{lmm}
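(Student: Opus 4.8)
The plan is to get the lower bound by discarding, in the definition of $\mu_{m,q}(p_0)$, all but the terms for which one of the forms $n+h_iq$ equals the prime $p_0$, and then to evaluate what remains by rerunning the argument of Lemma~\ref{lmm:ConstSum} with the roles of $n$ and $q$ interchanged. Since the summand defining $\mu_{m,q}$ is a square, for each prime $q\in\mathcal I_m$ I may keep only $n=n_i:=p_0-h_iq$, $1\le i\le k$. The hypotheses $h_kx<p_0<U/m-h_kx$ guarantee $1\le n_i\le U/m$ for every such $q$ and $i$; the $n_i$ are pairwise distinct; and $(n_i(mn_i-1),P_w)=1$ holds automatically, because $P_w\mid h_i$ for all $i$ gives $n_i\equiv p_0$ and $mn_i-1\equiv mp_0-1\pmod{P_w}$, while $p_0\in\mathcal R_m$ gives $(p_0(mp_0-1),P_w)=1$. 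Thus
\[
\mu_{m,q}(p_0)\ \ge\ \alpha_{m,q}\sum_{i=1}^k\Bigl(\sum_{\substack{d_1,\dots,d_k\\ d_j\mid n_i+h_jq\ \forall j}}\ \sum_{\substack{e_1,\dots,e_k\\ e_j\mid m(n_i+h_jq)-1\ \forall j}}\lambda_{\mathbf d,\mathbf e}\Bigr)^2 .
\]

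Next I would collapse the $i$-th variable. For $n=n_i$ we have $n_i+h_iq=p_0$: since $d_i\mid p_0$ and $d_i\le x^{1/10}<z<p_0$ we get $d_i=1$, and since $e_i\mid mp_0-1$ with $e_i\le y$ and $(mp_0-1,P_y)=1$ we get $e_i=1$. By \eqref{eq:LambdaDef} the inner double sum becomes a $(k-1)$-dimensional GPY/Selberg weight for the linear (in $q$) forms $p_0+(h_j-h_i)q$ and $m(p_0+(h_j-h_i)q)-1$, $j\ne i$, with the scalar $F_{i,j}(0)G(0)$ absorbed into $c_j$. Expanding the squares, swapping summations and summing over $q\in\mathcal I_m$, the $i$-th term equals
\[
\sideset{}{'}\sum_{\substack{d_j,d_j',e_j,e_j'\ (j\ne i)}}\lambda_{\mathbf d,\mathbf e}\lambda_{\mathbf d',\mathbf e'}\ \#\bigl\{q\in\mathcal I_m\ \text{prime}:[d_j,d_j']\mid p_0+(h_j-h_i)q,\ [e_j,e_j']\mid m(p_0+(h_j-h_i)q)-1\ \forall j\ne i\bigr\},
\]
where $\sideset{}{'}\sum$ records the coprimality and consistency restrictions (each $h_j-h_\ell$, and $m$, is invertible modulo every prime dividing the modulus, as those primes all exceed $w$; the only exceptional primes are the $O_k(\log x)$ ones dividing $\prod_{j,\ell}(m(h_j-h_\ell)q-1)$, which contribute $1+o_k(1)$, exactly as in Lemma~\ref{lmm:ConstSum}). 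By the Chinese remainder theorem $q$ is confined to one residue class coprime to the modulus $M:=\prod_{j\ne i}[d_j,d_j'][e_j,e_j']\le x^{1/4}$; since $|\mathcal I_m|\gg x/\log x$, the Bombieri--Vinogradov theorem (used as in Lemma~\ref{lmm:SetSizes}) evaluates the count as $(1+o_k(1))|\mathcal I_m|/(\phi(M)\log x)$, with a total error swallowed by the $\ll_k 1$ weights.

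What is left is an arithmetic sum of exactly the type treated in Lemma~\ref{lmm:ConstSum}: Fourier-expand $e^tF_{\ell,j}(t)$ and $e^tG(t)$, factor into an Euler product $\prod_p K_p^\flat$ (with $K_p^\flat=1+O_k(p^{-2})$ at generic $p$ and explicit twin-prime-type local factors at the exceptional primes), peel off the $\zeta$-value ratios over $\ell\ne i$ to obtain the saving $(\log x)^{-(k-1)}(\log y)^{-(k-1)}$, and push the contours to $|\Im z|=o(1)$. Using $\int_0^\infty F(t_1,\dots,t_k)\,dt_i=-\sum_j c_jF_{i,j}(0)\prod_{\ell\ne i}F'_{\ell,j}(t_\ell)$ and $\int_0^\infty G'=-G(0)$, together with the symmetry of $F$, the $(k-1)$-fold integrals collapse to $J_k^{(1)}(F)$ and $J_k^{(2)}(G)$; one is left with these integrals times a singular series $\mathfrak S_m^\flat$ and the above power of logarithms. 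Summing over $i=1,\dots,k$ (each term equal by the symmetry of $F$), multiplying by $\alpha_{m,q}=(1+o_k(1))m(\log x)^k(\log y)^k/(U\mathfrak S_mI_k^{(1)}(F)I_k^{(2)}(G))$ from Lemma~\ref{lmm:ConstSum}, and substituting the value of $|\mathcal R_m|$ from Lemma~\ref{lmm:SetSizes}, all powers of $\log x$ and $\log y$ cancel and the Euler products reduce to a positive constant, giving the asserted bound.

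The main obstacle is precisely this last Euler-product bookkeeping: one must verify that $\mathfrak S_m^\flat$ coincides, up to $1+o_k(1)$ and a harmless positive constant, with the quotient of $\mathfrak S_m$ by the local density of the collapsed pair of forms — i.e. the twin-prime-type constant in the formula of Lemma~\ref{lmm:SetSizes} for $|\mathcal R_m|$ — so that $\alpha_{m,q}\cdot(\text{$q$-sum})\cdot|\mathcal R_m|^{-1}$ telescopes to the clean ratio $kJ_k^{(1)}(F)J_k^{(2)}(G)/(I_k^{(1)}(F)I_k^{(2)}(G))$ times $|\mathcal I_m|/(\log x)$. The only additional care needed is the uniformity in $q$ of the exceptional local factors when invoking Bombieri--Vinogradov, and this is handled exactly as in Lemma~\ref{lmm:ConstSum}, using that $w\to\infty$ faster than the fixed constant $k$ and that $\prod_{h,h'}(mq(h-h')-1)\ll x^{O_k(1)}$.
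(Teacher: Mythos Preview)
Your overall architecture matches the paper's: restrict to $n=p_0-h_iq$, force $d_i=e_i=1$, expand the square, and count primes $q$ in residue classes via Bombieri--Vinogradov. The singular series $\mathfrak S_m^\flat$ that emerges from the $q$-sum is indeed $q$-independent (it depends on $p_0$, as in the paper's $\mathfrak S^{(2)}_{m,p_0,h}$), and your treatment of that part is essentially right.

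The genuine gap is your handling of $\alpha_{m,q}$. You write $\alpha_{m,q}=(1+o_k(1))\,m(\log x)^k(\log y)^k/(U\mathfrak S_m I_k^{(1)}I_k^{(2)})$ and pull it through the $q$-sum as a constant, but Lemma~\ref{lmm:ConstSum} gives $\alpha_{m,q}$ in terms of $\mathfrak S_{m,q}$, and $\mathfrak S_{m,q}$ genuinely depends on $q$: for each prime $w<p\le y$ dividing some $mq(h_i-h_j)-1$ one has $\omega_{m,q}(p)<2k$, and the resulting correction factor to $\mathfrak S_{m,q}^{-1}$ is $\prod_{p\mid\prod_{i,j}(mq(h_i-h_j)-1),\,p>w}(1+O_k(1/p))$. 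For a worst-case $q$ this product is \emph{not} $1+o_k(1)$ (nor even $\gg_k 1$): the integer $\prod_{i,j}(mq(h_i-h_j)-1)\ll x^{O_k(1)}$ can have $O_k(\log x/\log w)$ prime factors clustered just above $w$, making $\sum 1/p$ as large as $\asymp\log_3 x$. So you cannot invoke Bombieri--Vinogradov on the unweighted count $\#\{q\in\mathcal I_m:q\equiv a\ (\mathrm{mod}\ M)\}$ and then multiply by a fixed $\alpha_{m,q}$; the weight is inside the sum. Your appeal to ``exactly as in Lemma~\ref{lmm:ConstSum}'' does not apply, because in that lemma $q$ is fixed.

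The paper's device for this is the step you are missing: it lower-bounds $\mathfrak S_{m,q}^{-1}$ by a $q$-independent constant times $\prod_{w<p\le z_0,\,p\mid\prod(mq(h'-h'')-1)}(1-2k/p)$ (the truncation to $p\le z_0=\log x/\log_2 x$ costs only $1+o_k(1)$), and then \emph{expands} this product as a signed sum $\sum_{a_{i,j}\mid P_{z_0}/P_w}(-2k)^{\omega([\mathbf a])}[\mathbf a]^{-1}$ with the divisibility conditions $a_{i,j}\mid mq(h_i-h_j)-1$. These conditions are now linear constraints on $q$ and are absorbed into the Bombieri--Vinogradov modulus alongside the $[d_j,d_j'],[e_j,e_j']$. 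One then shows that the terms with $\mathbf a\ne\mathbf 1$ contribute $o_k(\cdot)$ (each prime $p\mid a_{i,j}$ forces an extra $1/p$ in the Euler product, and summing over $\mathbf a$ supported on $p>w$ gives a factor $\prod_{w<p\le z_0}(1+O_k(1)/p^2)-1=o_k(1)$), so the main term is exactly the $\mathbf a=\mathbf 1$ contribution, which is what you wrote down. Without this expansion (or an equivalent averaging argument over $q$), your computation of $\sum_q\alpha_{m,q}S_q$ is not justified.
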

\begin{proof}
We substitute the definition of $\mu_{m,q}$ to give
\begin{align}
\sum_{q\in\mathcal{I}_m\text{ prime}}&\mu_{m,q}(p_0)\nonumber\\
&=\sum_{q\in\mathcal{I}_m\text{ prime}}\alpha_{m,q}\sum_{\substack{n\le U/m\\n\equiv p_0\pmod{q}\\ 
(n(m n-1),P_w)=1}}\Bigl(\sum_{\substack{d_1,\dots,d_k\\ d_i|n+h_i q}}\sum_{\substack{e_1,\dots,e_k\\ e_i|m(n+h_i q)-1}}\lambda_{d_1,\dots,d_k,e_1,\dots,e_k}\Bigr)^2.
\end{align}
Since all terms are non-negative, we obtain a lower bound by dropping all terms in the sum over $n$ except for when $n=p_0-h q$ for some $h\in\mathcal{H}$. We see that $(p_0+(h_i-h)q,P_{w})=(p_0,P_w)=1$ and $(mp_0+m(h_i-h)q-1,P_w)=(mp_0-1,P_w)=1$ so all the terms $n=p_0-h q$ appear in the sum (since, by assumption, $h_k x<p_0<U/m-h_k x$). This gives
\begin{align}
\sum_{q\in\mathcal{I}_m\text{ prime}}&\mu_{m,q}(p_0)\nonumber\\
&\ge \sum_{h\in\mathcal{H}} \sum_{\substack{q\in\mathcal{I}_m\text{ prime}
}}
\alpha_{m,q}\Bigl(\sum_{\substack{d_1,\dots,d_k\\ d_i|p_0+(h_i-h)q}}\sum_{\substack{e_1,\dots,e_k\\ e_i|m(p_0+(h_i-h)q)-1}}\lambda_{d_1,\dots,d_k,e_1,\dots,e_k}\Bigr)^2.\label{eq:MainExpression}
\end{align}
We split the sum over $q$ into residue classes modulo $P_{w}$. This gives
\begin{equation}
\sum_{h\in\mathcal{H}} \sum_{\substack{w_0\text{ (mod $P_w$)}
\\ (w_0,P_w)=1}}\sum_{\substack{q\in\mathcal{I}_m\text{ prime}\\ q\equiv w_0\text{ (mod $P_w$)}}}\alpha_{m,q}\Bigl(\sum_{\substack{d_1,\dots,d_k\\ d_i|p_0+(h_i-h)q}}\sum_{\substack{e_1,\dots,e_k\\ e_i|m(p_0+(h_i-h)q)-1}}\lambda_{d_1,\dots,d_k,e_1,\dots,e_k}\Bigr)^2.
\end{equation}
We now replace $\alpha_{m,q}$ with a slightly more manageable expression with less dependence on $q$. We first note that since all the $h_i$ are a multiple of $P_w$, for every prime $p\le w$ we have that $\omega_{m,q}(p)=2$ or $1$ depending on whether or not $(m,p)=1$. For $w<p\le y$ we have $\omega_{m,q}(p)=2k$ if 
\[p\nmid \prod_{h',h''\in\mathcal{H}}(mq(h'-h'')-1).\]
Thus, recalling that we only consider even $m$, we have
\begin{align}
\mathfrak{S}_{m,q}^{-1}&=\prod_{p\le y}\Bigl(1-\frac{\omega_{m,q}(p)}{p}\Bigr)^{-1}\Bigl(1-\frac{1}{p}\Bigr)^{2k}\nonumber\\
&\ge (1+o_k(1))\mathfrak{S}_m\prod_{\substack{w<p\le y\\ p|\prod_{h',h''}(m q(h'-h'')-1)}}\Bigl(1-\frac{2k}{p}\Bigr),\label{eq:SmqInv}
\end{align}
where we have put
\begin{equation}
\mathfrak{S}_m=2^{-(2k-1)}\Bigl(\prod_{p|m,\, p>2}\frac{p-2}{p-1}\Bigr)\prod_{2<p\le w}\Bigl(1-\frac{2}{p}\Bigr)^{-1}\Bigl(1-\frac{1}{p}\Bigr)^{2k}.\label{eq:SmDef}
\end{equation}
Since $\prod_{h',h''}(m q(h'-h'')-1)\ll x^{O_k(1)}$, we can restrict the primes occurring in the final product on the right hand side of \eqref{eq:SmqInv} to be less than $z_0=\log{x}/\log_2{x}$ at a cost of a factor of $1+o_k(1)$. Expanding out this product then gives
\begin{align}
\mathfrak{S}_{m,q}^{-1}&\ge (1+o_k(1))\mathfrak{S}_m\sum_{\substack{a_{1,2},\dots,a_{k,k-1}|P_{z_0}/P_{w}\\ a_{i,j}|m q(h_i-h_j)-1}}\frac{(-2k)^{\omega([\mathbf{a}])}}{[\mathbf{a}]}.
\end{align}
Here we have put $[\mathbf{a}]=[a_{1,2},\dots,a_{k,k-1}]$. Substituting this bound for $\mathfrak{S}_{m,q}$ and our bound for $\alpha_{m,q}$ from Lemma \ref{lmm:ConstSum} into \eqref{eq:MainExpression}, we have
\begin{align}
\sum_{q\in\mathcal{I}_m\text{ prime}}\mu_{m,q}(p_0)&\ge \frac{(1+o_k(1))m(\log{x})^k(\log{y})^k\mathfrak{S}_m }{U I_k^{(1)}(F) I_k^{(2)}(G)}\nonumber\\
&\times\sum_{h\in\mathcal{H}}\sum_{\substack{w_0\text{ (mod $P_w$)}\\ 
(w_0,P_w)=1}}
\sum_{a_{1,2},\dots,a_{k,k-1}|P_{z_0}/P_{w}}\frac{(-2k)^{\omega([\mathbf{a}])}}{[\mathbf{a}]} 
\nonumber\\ 
&\times\sum_{\substack{q\in\mathcal{I}_m\text{ prime}\\ q\equiv w_0\text{ (mod $P_w$)}\\ a_{i,j}|mq(h_i-h_j)-1\,\forall i\ne j}}\Bigl(\sum_{\substack{d_1,\dots,d_k\\ d_i|p_0+(h_i-h)q}}\sum_{\substack{e_1,\dots,e_k\\ e_i|m(p_0+(h_i-h)q)-1}}\lambda_{d_1,\dots,d_k,e_1,\dots,e_k}\Bigr)^2.\label{eq:MainExpression2}
\end{align}
We concentrate on the sum over $q$. For convenience we will consider the case when $h$ in the outer sum is $h_k$; the other cases are entirely analogous.

Since $p_0$ is a prime larger than $x$, and $(mp_0-1,P_{y})=1$, we may restrict to $d_k=e_k=1$ since no other divisors of $p_0$ or $mp_0-1$ occur. Inserting this condition, expanding the square and swapping the order of summation then gives that the sum over $q$ is equal to
\begin{align}
\sum_{\substack{d_1,\dots,d_k\\ d_1',\dots, d_k'\\ d_k=d_k'=1}}\sum_{\substack{e_1,\dots,e_k\\ e_1',\dots, e_k'\\ e_k=e_k'=1}}\lambda_{\mathbf{d},\mathbf{e}}\lambda_{\mathbf{d}',\mathbf{e}'}\sum_{\substack{q\in\mathcal{I}_m\text{ prime}\\ [d_i,d_i']|p_0+(h_i-h_k)q\,\forall i\\ [e_i,e_i']|mp_0+m(h_i-h_k)q-1\,\forall i \\ q\equiv w_0\text{ (mod $P_w$)}\\ a_{i,j}|m q(h_i-h_j)-1\,\forall i\ne j}}1.\label{eq:qSum}
\end{align}
This is now an expression which is similar to that considered in the proof of \cite[Proposition 9.2]{MaynardII}. Let us be given $a_{1,2},\dots,a_{k,k-1}|P_{z_0}/P_w$ and $\mathbf{d},\mathbf{d}',\mathbf{e},\mathbf{e}'$ with $\lambda_{\mathbf{d},\mathbf{e}}\lambda_{\mathbf{d}',\mathbf{e}'}\ne 0$ and with $d_k=d_k'=e_k=e_k'=1$. We see that the inner sum over $q$ is empty unless $d_1d_1',\dots,d_k d_k',P_w$ are pairwise coprime, $e_1e_1',\dots,e_k e_k',P_w$ are pairwise coprime and $a_{1,2},\dots,a_{k,k-1},m$ are pairwise coprime. Moreover, we must also have that 
\begin{align}
(d_i d_i',e_j e_j')&\,|\,mp_0(h_i-h_j)+h_k-h_i,&&\forall i,j,&\nonumber\\
(d_i d_i',a_{j,\ell})&\,|\,(h_j-h_\ell)mp_0+h_i-h_k,&&\forall i,j,\ell,&\label{eq:Div1}\\
(e_i e_i',a_{j,\ell})&\,|\,(h_j-h_\ell)(1-p_0m)-h_i+h_k,&&\forall i,j,\ell.\nonumber&
\end{align}
If all of these conditions are satisfied, then the inner sum can be rewritten as a sum over primes in $\mathcal{I}_m$ in a single residue class modulo the least common multiple of $d_1$, $d_1'$, $e_1$, $e_1'$,$\dots$, $d_k$, $d_k'$, $e_k$, $e_k'$, $a_{1,2}$ $\dots$, $a_{k,k-1}$, $P_w$. Moreover, this residue class will be coprime to the modulus. Thus in this case the inner sum is
\begin{equation}
\frac{\sum_{q\in\mathcal{I}_m\text{ prime}}1}{\phi(P_{w})\phi([\mathbf{d},\mathbf{d}',\mathbf{e},\mathbf{e}',\mathbf{a}])}+O(E(x;P_w[\mathbf{d},\mathbf{d}',\mathbf{e},\mathbf{e}',\mathbf{a}])),
\end{equation}
where we have written $[\mathbf{d},\mathbf{d}',\mathbf{e},\mathbf{e}',\mathbf{a}]$ to represent the least common multiple of all the $d$, $d'$, $e$, $e'$ and $a$ variables, and where
\begin{equation}
E(x;q)=\sup_{t\le x}\sup_{(a,q)=1}\Bigl|\pi(t;q,a)-\frac{\pi(t)}{\phi(q)}\Bigr|.\end{equation}
We put $r=P_{w}[\mathbf{d},\mathbf{d}',\mathbf{e},\mathbf{e}',\mathbf{a}]$, and note that $r\ll x^{1/5+o_k(1)}$, from the support conditions on $F_{\ell,j}$ and $G$ and the fact that $a_{j,l}|P_{z_0}=x^{o(1)}$. Therefore, using the trivial bound $E(x;q)\ll x/q$ and the Bombieri-Vinogradov theorem, we see the contribution of the error to \eqref{eq:qSum} is
\begin{align}
&\ll_k \sum_{r\ll x^{1/5+\epsilon}}\tau_{k^2+4k}(r)E(x;r)\nonumber\\
&\ll \Bigl(x\sum_{r\ll x^{1/5+\epsilon}}\frac{\tau_{k^2+4k}(r)^2}{r}\Bigr)^{1/2}\Bigl(\sum_{r\ll x^{1/5+\epsilon}}E(x;r)\Bigr)^{1/2}\ll_k \frac{x}{(\log{x})^{2k}},
\end{align}
which will be negligible (here we used $|\lambda_{\mathbf{d},\mathbf{e}}|\ll_k 1$ for all $\mathbf{d},\mathbf{e}$). 

$\mathcal{I}_m$ is an interval of length $\delta|\mathcal{R}_m|\log{x}\gg x(\log{x})^{-2}$ by Lemma \ref{lmm:SetSizes} and our bound on $m$. Since $\mathcal{I}_m$ is contained in $[x/2,x]$, the number of primes in $\mathcal{I}_m$ is $(1+o(1))|\mathcal{I}_m|/\log{x}$ by the prime number theorem. Therefore the main term of \eqref{eq:qSum} then simplifies to
\begin{align}
&\frac{(1+o(1))|\mathcal{I}_m|}{\phi(P_{w})\log{x}}\sideset{}{^*}\sum_{\substack{d_1,\dots,d_k\\ d_1',\dots,d_k'}}\,\,\sideset{}{^*}\sum_{\substack{e_1,\dots,e_k\\ e_1',\dots,e_k'}}\frac{\lambda_{\mathbf{d},\mathbf{e}}\lambda_{\mathbf{d}',\mathbf{e}'}}{\phi([\mathbf{d},\mathbf{d}',\mathbf{e},\mathbf{e}',\mathbf{a}])}\nonumber\\
&=\frac{(1+o(1))|\mathcal{I}_m|G(0)^2}{\phi(P_{w})\log{x}}\sum_{j=1}^J\sum_{j'=1}^J F_{k,j}(0)F_{k,j'}(0)\sideset{}{^{**}}\sum_{\substack{d_1,\dots,d_{k-1}\\ d_1',\dots,d_{k-1}'}}\,\,\,\sideset{}{^{**}}\sum_{\substack{e_1,\dots,e_{k-1}\\ e_1',\dots,e_{k-1}'}}\nonumber\\
&\times\frac{\prod_{\ell =1}^{k-1}\mu(d_\ell)\mu(d_\ell')\mu(e_\ell)\mu(e_\ell')F_{\ell ,j}\Bigl(\frac{\log{d_\ell}}{\log{x}}\Bigr)F_{\ell ,j'}\Bigl(\frac{\log{d_\ell'}}{\log{x}}\Bigr)G\Bigl(\frac{\log{e_\ell}}{\log{y}}\Bigr)G\Bigl(\frac{\log{e_\ell'}}{\log{y}}\Bigr)}{\phi([\mathbf{d},\mathbf{d}',\mathbf{e},\mathbf{e}',\mathbf{a}])}.
\end{align}
Here we have written $\sum^*$ to indicate that the summation is restricted to the conditions that $d_k=d_k'=e_k=e_k'=1$, that $d_1d_1',\dots,d_k d_k',P_w$ are pairwise coprime, that $e_1e_1',\dots,e_k e_k',P_w$ are pairwise coprime, and that the divisibility constraints \eqref{eq:Div1} are satisfied. Similarly we have written $\sum^{**}$ for these constraints with the conditions on $d_k,d_k',e_k,e_k'$ dropped (because we have separated their contribution).

We can evaluate this by an essentially identical argument to that used in Lemma \ref{lmm:ConstSum}. The presence of the Euler $\phi$ function in the denominator affects $K_p$ by a factor $1+O_k(p^{-2})$, and so has a negligible effect. The presence of the $\mathbf{a}$ factor in the denominator means that for any $p|a_{i,j}$ we have $K_p\ll_k p^{-1}$. This means that the contribution to \eqref{eq:qSum} when $a_{1,2},\dots,a_{k,k-1}\ne 1,\dots,1$ is
\begin{equation}
\ll_k \frac{\mathfrak{S}_{m,p_0,h_k}^{(2)}|\mathcal{I}_m|}{\phi(P_w)(\log{x})^{k}(\log{y})^{k-1}}\prod_{p|a_{i,j}\text{ for some $i,j$}}\frac{O_k(1)}{p},\label{eq:ABound}
\end{equation}
where
\begin{align}
\mathfrak{S}_{m,p_0,h}^{(2)}&=\prod_{p\le w}\Bigl(1-\frac{1}{p}\Bigr)^{-(2k-2)}\prod_{w<p\le y}\Bigl(1-\frac{\omega^{(2)}_{m,p_0,h}(p)}{p}\Bigr)\Bigl(1-\frac{1}{p}\Bigr)^{-(2k-2)},\label{eq:Smph2Def}\\
\omega^{(2)}_{m,p_0,h}(p)&=\#\{1\le n\le p:p_0+(h_i-h)n\equiv 0\pmod{p}\nonumber\\
&\qquad\qquad\text{ or }m(p_0+(h_i-h)n)\equiv 1\pmod{p}\text{ for some $i$}\}.
\end{align}
Thus \eqref{eq:ABound} shows that the contribution from all $a_{1,2},\dots,a_{k,k-1}\ne 1,\dots,1$ is
\begin{align}
\sum_{\substack{a_{1,2},\dots,a_{k,k-1}|P_{z_0}/P_{w}\\ a_{1,2},\dots,a_{k,k-1}\ne 1,\dots,1}}&\frac{(-2k)^{\omega([\mathbf{a}])}}{[\mathbf{a}]}\nonumber\\
&\times\sum_{\substack{q\in\mathcal{I}_m\text{ prime}\\ q\equiv w_0\text{ (mod $P_w$)}\\ a_{i,j}|mq(h_i-h_j)-1\,\forall i\ne j}}\Bigl(\sum_{\substack{d_1,\dots,d_k\\ d_i|p_0+(h_i-h_k)q}}\sum_{\substack{e_1,\dots,e_k\\ e_i|m(p_0+(h_i-h_k)q)-1}}\lambda_{d_1,\dots,d_k,e_1,\dots,e_k}\Bigr)^2\nonumber\\
&\ll_k \frac{\mathfrak{S}_{m,p_0,h_k}^{(2)}|\mathcal{I}_m|}{\phi(P_w)(\log{x})^{k}(\log{y})^{k-1}}\Bigl(\prod_{w<p\le z_0}\Bigl(1+\frac{O_k(1)}{p^2}\Bigr)-1\Bigr)\nonumber\\
&=o_k\Bigl(\frac{\mathfrak{S}_{m,p_0,h_k}^{(2)}|\mathcal{I}_m|}{\phi(P_w)(\log{x})^k(\log{y})^{k-1}}\Bigr).\label{eq:BadPrimesSmall}
\end{align}
Hence the main contribution comes from when $a_{1,2}=\dots=a_{k,k-1}=1$. In this case, the contribution is 
\begin{align}
&\frac{(1+o_k(1))\mathfrak{S}_{m,p_0,h_k}^{(2)}|\mathcal{I}_m|G(0)^{2}}{\phi(P_w)(\log{x})^{k}(\log{y})^{k-1}}\sum_{j=1}^J\sum_{j'=1}^J F_{k,j}(0)F_{k,j'}(0)\Bigl(\int_0^\infty G'(t)^2 dt\Bigr)^{k-1}\nonumber\\
&\qquad\qquad\times\prod_{\ell=1}^{k-1}\int_{0}^\infty F_{\ell,j}'(t)F_{\ell,j'}'(t) dt \nonumber\\
&\qquad=\frac{(1+o_k(1))\mathfrak{S}_{m,p_0,h_k}^{(2)}|\mathcal{I}_m|}{\phi(P_w)(\log{x})^{k}(\log{y})^{k-1}}J_k^{(1)}(F)J_k^{(2)}(G).\label{eq:MainEstimate}
\end{align}
We obtain the same estimate \eqref{eq:MainEstimate} with $\mathfrak{S}_{m,p_0,h_k}^{(2)}$ replaced with $\mathfrak{S}_{m,p_0,h}^{(2)}$ for a different $h\in\mathcal{H}$. (Since $F$ is symmetric, $J_k^{(1)}(F)$ does not depend on the choice of $h\in\mathcal{H}$.) Substituting \eqref{eq:BadPrimesSmall} and \eqref{eq:MainEstimate} into our main term \eqref{eq:MainExpression2}, we obtain
\begin{align}
\sum_{q\in\mathcal{I}_m\text{ prime}}\mu_{m,q}(p_0)&\ge \frac{(1+o_k(1))m (\log{y})J_k^{(1)}(F)J_k^{(2)}(G)|\mathcal{I}_m|}{U I_k^{(1)}(F)I_k^{(2)}(G)\phi(P_w)}\nonumber\\
&\times\sum_{h\in\mathcal{H}}\mathfrak{S}_m\mathfrak{S}^{(2)}_{m,p_0,h}\sum_{\substack{w_0\text{ (mod $P_w$)}\\ 
(w_0,P_w)=1}}1.\label{eq:MainExpression3}
\end{align}
The inner sum is clearly $\phi(P_w)$. We note that for any $h\in\mathcal{H}$ we have $\omega_{m,p_0,h}^{(2)}(p)=0$ for $p\le w$ and $\omega_{m,p_0,h}^{(2)}(p)\le 2k-2$ for all $p$. Therefore, recalling the definitions \eqref{eq:SmDef} and \eqref{eq:Smph2Def} of $\mathfrak{S}_m$ and $\mathfrak{S}_{m,p_0,h}$, we have
\begin{align}
\mathfrak{S}_m\mathfrak{S}^{(2)}_{m,p_0,h}&=2^{-1}\prod_{p|m,\, p>2}\frac{p-2}{p-1}\prod_{p<w}\frac{\Bigl(1-\frac{1}{p}\Bigr)^{2}}{\Bigl(1-\frac{2}{p}\Bigr)}\prod_{w<p\le y}\frac{\Bigl(1-\frac{\omega_{m,p_0,h}^{(2)}(p)}{p}\Bigr)}{\Big(1-\frac{1}{p}\Bigr)^{2k-2}}\nonumber\\
&\gg \prod_{p|m,\, p>2}\frac{p-2}{p-1}\prod_{w<p\le y}\Bigl(1-\frac{2k-2}{p}\Bigr)\Big(1-\frac{1}{p}\Bigr)^{-(2k-2)}\nonumber\\
&\gg(1+o_k(1))\prod_{p|m,\, p>2}\frac{p-2}{p-1}.
\end{align}
Recalling the estimate on the size of $\mathcal{R}_m$ from Lemma \ref{lmm:SetSizes} then gives the result.
\end{proof}
Finally, we recall the key integral estimate from \cite{Maynard}.
\begin{lmm}\label{lmm:Smooth}
There exists a suitable choice of smooth functions $F,G$ such that
\[\frac{k J_k^{(1)}(F)J_k^{(2)}(G)}{I_k^{(1)}(F)I_k^{(2)}(G)}\gg \log{k}.\]
\end{lmm}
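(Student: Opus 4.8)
The plan is to separate the problem into a one-dimensional Selberg-type optimization for $G$ and the multidimensional optimization for $F$ that already appears in the work on small gaps between primes \cite{Maynard}. Since the ratio in question factorizes as
\[
\frac{k J_k^{(1)}(F)J_k^{(2)}(G)}{I_k^{(1)}(F)I_k^{(2)}(G)}=\Bigl(\frac{k J_k^{(1)}(F)}{I_k^{(1)}(F)}\Bigr)\cdot\Bigl(\frac{J_k^{(2)}(G)}{I_k^{(2)}(G)}\Bigr),
\]
it suffices to exhibit, first, a single smooth nonnegative $G$ supported on $[0,1]$, not identically zero, for which the second factor is bounded below by a positive absolute constant, and second, a symmetric $F$ of the required product-sum shape with support in $\{t_1+\dots+t_k\le 1/10\}$ for which the first factor is $\gg\log k$.

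For the factor involving $G$, I would simply make an explicit choice. Writing $J_k^{(2)}(G)/I_k^{(2)}(G)=G(0)^2/\int_0^\infty G'(t)^2\,dt$, I want to choose $G$ smooth, supported on $[0,1]$, with $G(0)\neq 0$, so that $G(0)^2\big/\int_0^1 G'(t)^2\,dt$ is a fixed positive constant; any fixed smooth bump-type function with $G(0)=1$ and $G(1)=0$ works, for instance a smoothed version of $1-t$. (One cannot do better than a constant here because of the Cauchy–Schwarz bound $G(0)^2=\big(\int_0^1 G'(t)\,dt\big)^2\le\int_0^1 G'(t)^2\,dt$, but a constant is all that is needed.) This step is routine.

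For the factor involving $F$, the point is that $k J_k^{(1)}(F)/I_k^{(1)}(F)$ is exactly the ratio optimized in \cite{Maynard} to prove bounded gaps between primes, and there it is shown that a suitable choice of $F$ — symmetric, of the form $\sum_j c_j\prod_\ell F_{\ell,j}(t_\ell)$ — makes this ratio $\gg\log k$. The only new wrinkle is the support restriction $\sum_i u_i\le 1/10$ rather than $\le 1$; but this is handled by a trivial rescaling: if $\widetilde F(t_1,\dots,t_k)$ is supported on $\sum t_i\le 1$ and achieves ratio $\gg\log k$, then $F(t_1,\dots,t_k)=\widetilde F(10 t_1,\dots,10 t_k)$ is supported on $\sum t_i\le 1/10$, and under this scaling each of $J_k^{(1)}$ and $I_k^{(1)}$ is multiplied by the same power of $10$ (namely $10^{-(k+1)}$ and $10^{-k}$ respectively, so the ratio $kJ_k^{(1)}/I_k^{(1)}$ picks up only a harmless factor of $10^{-1}$), leaving the bound $\gg\log k$ intact. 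The scaled $F$ still has the required product-sum form and symmetry.

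The main obstacle, such as it is, is purely a matter of checking compatibility of the technical requirements imposed in the Setup section — that $F$ be symmetric, be expressible as $\sum_{j=1}^J c_j\prod_{\ell=1}^k F_{\ell,j}$ with the $F_{\ell,j}$ and $c_j$ nonnegative, and be supported in the small simplex — with the construction of \cite{Maynard}. Since the construction there already produces $F$ of precisely this symmetric product-sum shape with nonnegative data, and rescaling preserves all of these properties, there is no real difficulty; the lemma follows by combining the rescaled $F$ from \cite{Maynard} with the explicit $G$ above. I would therefore present the proof as: quote the bound $k J_k^{(1)}(\widetilde F)/I_k^{(1)}(\widetilde F)\gg\log k$ from \cite{Maynard}, rescale, record the explicit $G$, and multiply.
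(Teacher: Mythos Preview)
Your proposal is correct and follows essentially the same route as the paper: take $G$ to be a smooth approximation of $1-t$ so that $J_k^{(2)}(G)/I_k^{(2)}(G)$ is a positive constant, and take $F$ to be the extremal function from \cite{Maynard} rescaled by a factor of $10$ to land in the simplex $\sum t_i\le 1/10$. The one point you gloss over is that the extremal $F_k(t_1,\dots,t_k)=\mathbf{1}_{\sum t_i\le 1}\prod_i g(kt_i)$ from \cite{Maynard} is neither smooth nor literally of the product-sum form $\sum_j c_j\prod_\ell F_{\ell,j}$ (the simplex cutoff spoils the factorization), so an $L^1$/$L^2$ density argument is needed to approximate it by a symmetric smooth function of the required shape without losing the ratio bound; the paper makes this step explicit.
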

\begin{proof}
We choose $G(t)$ to be a smooth approximation to $1-t$ supported on $t\in [0,1]$ such that $J^{(2)}_k(G)/I_k^{(2)}(G)\ge 1-\epsilon$. We can choose such a $G$ since the set of non-negative continuous functions supported on $[0,1]$ is $L^2$- and $L^1$- dense in the set of continuous non-negative functions supported on $[0,1]$ (by the Stone-Weierstrasss theorem).

We recall the choice of function giving \cite[Proposition 4.3, (iii)]{Maynard}. Let $F_k:[0,\infty)^k\rightarrow \mathbb{R}$ and $g:[0,\infty)\rightarrow\mathbb{R}$ be defined by
\begin{align}
F_k(t_1,\dots,t_k)&=\begin{cases}
\prod_{i=1}^kg(kt_i),\qquad &\text{if }\sum_{i=1}^k t_i\le 1,\\
0&\text{otherwise,}
\end{cases}\\
g(t)&=\begin{cases}
1/(1+At),\qquad &t\in[0,T],\\
0,&\text{otherwise,}
\end{cases}
\end{align}
and where $A=\log{k}-2\log_2{k}$ and $T=(e^A-1)/A$. This is clearly a non-negative symmetric function defined on $\sum_{i=1}^kt_i\le 1$.

By \cite[\S 7]{Maynard}, we have that $J_k^{(1)}(F_k)/I_k^{(1)}(F_k)\gg (\log{k})/k$. We choose $F_{\ell,j}$ such that $F$ is a smooth approximation to $F_k(10 t_1,\dots,10 t_k)$ supported on $\{(t_1,\dots,t_k)\in [0,\infty)^k:\sum_{i=1}^k t_i\le 1/10\}$ with 
\[J_k^{(1)}(F)/I_k^{(1)}(F)\ge (1/10-\epsilon)J_k^{(1)}(F_k)/I_k^{(1)}(F_k),\]
which gives the result.

We can choose such an approximation $F$ since the set of symmetric, non-negative linear combinations of direct products of smooth compactly supported functions on $[0,1]^k$ is $L^2$- and $L^1$-dense in the set of non-negative symmetric $L^2$-integrable functions on $[0,1]^k$.
\end{proof}
\begin{proof}[Completion of the proof of Proposition \ref{prpstn:MainProp}]
Since 
\[|\mathcal{I}_m|=(1+o(1))\delta |\mathcal{R}_m|\log{x},\]
by Lemmas \ref{lmm:PrimeSum} and \ref{lmm:Smooth} we have that any prime  $p_0\in\mathcal{R}_m$ with $h_k x<p_0<U/m-h_k x$ has the expected number $\sum_{q}\mu_{m,q}(p_0)$ of times $p_0$ is chosen  is $\gg \delta \log{k}$. By Lemma \ref{lmm:SetSizes}, the number of primes $p_0\in\mathcal{R}_m$ which do not satisfy $h_k x<p_0<U/m-h_k x$ is $o_k(|\mathcal{R}_m|)$ for $m<U/z(\log_2{x})^2$. 

Thus we have shown that if we choose residue classes randomly according to $\mu_{m,q}$, for all but $o_k(|\mathcal{R}_m|)$ primes $p_0\in\mathcal{R}_m$, the expected number $\sum_{q}\mu_{m,q}(p_0)$ of times $p_0\in\mathcal{R}_m$ is chosen is $\gg \delta \log{k}$. By choosing $k$ sufficiently large in terms of $\delta,\epsilon$, we can ensure that this expectation is larger than $\log{\epsilon^{-1}}$. But, by the argument of Section \ref{sec:Probabilistic}, this means that the expected number of $p_0\in\mathcal{R}_m$ which are not in any of the chosen residue classes must be less than $\epsilon|\mathcal{R}_m|$. Therefore there must be at least one assignment of residue classes for which at most $\epsilon|\mathcal{R}_m|$ of the primes $p_0\in\mathcal{R}_m$ are not chosen. By the argument at the end of Section \ref{sec:ErdosRankin}, this implies Proposition \ref{prpstn:MainProp}, and hence Theorem \ref{thrm:MainTheorem}.
\end{proof}
\section{Acknowledgments}
The author would like to thank Andrew Granville and Daniel Fiorilli for many useful comments and suggestions. This work was conducted whilst the author was a CRM-ISM postdoctoral fellow at the Universit\'e de Montr\'eal.
\bibliographystyle{plain}
\bibliography{LargeGaps}
\end{document}